\def\SYoung#1{\vbox{\smallskip\offinterlineskip
    \halign{&\vbox{##}\kern-\SThickness\cr #1}}}
\newdimen\SSquaresize \SSquaresize=4.5pt
\newdimen\SThickness \SThickness=.15pt
\newdimen\SCorrection \SCorrection=7pt
\def\SCarre#1{\hbox{\vrule width \SThickness
   \vbox to \SSquaresize{\hrule height \SThickness\vss
      \hbox to \SSquaresize{\hss$\scriptstyle#1$\hss}
   \vss\hrule height\SThickness}
   \unskip\vrule width \SThickness}
   \kern-\SThickness}
\makeatletter \@addtoreset{equation}{section}
\newtheorem{theorem}{Theorem}[section]
\newtheorem{remark}[theorem]{Remark}
\title[The Briggs inequality of Boros-Moll sequences]{The Briggs inequality of Boros-Moll sequences}
\author[Z.-X. Zhang]{Zhong-Xue Zhang}
       \address{Center for Combinatorics, LPMC, Nankai University, Tianjin 300071, P. R. China.}
       \email{zhzhx@mail.nankai.edu.cn}
\author[James J. Y. Zhao]{James Jing Yu Zhao}
       \address{School of Accounting, Guangzhou College of Technology and Business,
       Foshan 528138, P. R. China.}
       \email{zhao@gzgs.edu.cn}
\subjclass{05A20, 11B83}
\keywords{Briggs inequality; Boros-Moll sequence; transposed Boros-Moll sequence; normalized  Boros-Moll sequence; log-concavity; log-convexity; ratio-log-convexity.}
\begin{document}

\begin{abstract}
Briggs conjectured that if a polynomial $a_0+a_1x+\cdots+a_nx^n$ with real coefficients has only negative zeros, then
$$a^2_k(a^2_k - a_{k-1}a_{k+1}) > a^2_{k-1}(a^2_{k+1} - a_ka_{k+2})
$$
for any $1\leq k\leq n-1$. The Boros-Moll sequence $\{d_i(m)\}_{i=0}^m$ arises in the study of evaluation of certain quartic integral, and a lot of interesting inequalities for this sequence have been obtained.
In this paper we show that the Boros-Moll sequence $\{d_i(m)\}_{i=0}^m$, its normalization $\{d_i(m)/i!\}_{i=0}^m$, and its transpose $\{d_i(m)\}_{m\ge i}$ satisfy the Briggs inequality.
For the first two sequences, we prove the Briggs inequality
by using a lower bound for $(d_{i-1}(m)d_{i+1}(m))/d_i^2(m)$ due to Chen and Gu and an upper bound due to Zhao.
For the transposed sequence, we derive the Briggs inequality by establishing its strict ratio-log-convexity. As a consequence, we also obtain the strict log-convexity of the sequence $\{\sqrt[n]{d_i(i+n)}\}_{n\ge 1}$ for $i\ge 1$.
\end{abstract}

\maketitle

\section{Introduction}

In the study of positive irreducibility of binding polynomials, Briggs \cite{Briggs1985-2} proposed two inequality conjectures concerning elementary symmetric functions over a set of positive numbers. Suppose that $X=\{x_1,x_2,\ldots,x_n\}$ is a set of $n$ variables. Recall that the $k$-th elementary symmetric function is defined by
\begin{align*}
	e_k(X)=\sum_{1\leq j_1<j_2<\cdots<j_k\leq n}x_{j_1}x_{j_2}\cdots x_{j_k}, \qquad 1\leq k\leq n.
\end{align*}
By convention, we set $e_0=1$ and $e_k=0$ if $k<0$ or $k>n$.
Briggs \cite{Briggs1985-2} conjectured that if $X$ is a set of $n$ positive numbers, then for $1\leq k\leq n-1$ the following two inequalities hold:
\begin{align}
	e_{k-1}e^2_{k+2} + e^2_{k}e_{k+3} + e^3_{k+1} &> e_{k+1}\left( e_{k-1}e_{k+3} + 2e_{k}e_{k+2}\right),\label{equ-elementary-1}\\
	e^2_k(e^2_k - e_{k-1}e_{k+1}) &> e^2_{k-1}(e^2_{k+1} - e_ke_{k+2}).\label{equ-elementary-2}
\end{align}
It is worth noting that the expression
\begin{align*}
	(x+x_1)(x+x_2)\cdots (x+x_n) =\sum_{k=0}^n e_{n-k}x^k
\end{align*}
allows us to formulate Briggs' conjecture in the following manner: If a polynomial $f(x)=\sum_{k=0}^n a_kx^k$ with positive coefficients has only real zeros, then for $1\leq k\leq n-1$ we have
\begin{align}
	a_{k-1}a^2_{k+2} + a^2_{k}a_{k+3} + a^3_{k+1} &> a_{k+1}\left( a_{k-1}a_{k+3} + 2a_{k}a_{k+2}\right),\label{equ-Briggs-1}\\
	a^2_k(a^2_k - a_{k-1}a_{k+1}) &> a^2_{k-1}(a^2_{k+1} - a_ka_{k+2}).\label{equ-Briggs-2}
\end{align}

We would like to point out that the inequality \eqref{equ-Briggs-1} is closely related to the notion of $2$-log-concavity. Recall that a sequence $\{a_k\}_{k\ge 0}$ with real numbers is said to be \emph{log-concave} if
\begin{align}\label{eq:df-log-concave}
a_k^2-a_{k-1}a_{k+1}\ge 0
\end{align}
for any $k\ge 1$.
The sequence $\{a_k\}_{k\ge 0}$ is called log-convex if $a_k^2-a_{k-1}a_{k+1}\le 0$ for $k\ge 1$.
A polynomial is said to be log-concave (resp. log-convex) if its coefficient sequence is log-concave (resp. log-convex). For more information on log-concave and log-convex sequences, see Br\"{a}nd\'{e}n \cite{Branden2015}, Brenti \cite{Brenti} and Stanley \cite{Stanley}.
For a sequence $\{a_n\}_{n\geq 0}$, define an operator $\mathcal{L}$ by $\mathcal{L}(\{a_n\}_{n\geq 0})=\{b_n\}_{n\geq 0}$, where $b_n=a_n^2-a_{n-1}a_{n+1}$ for $n\geq 0$, with the convention that $a_{-1}=0$.
A sequence $\{a_n\}_{n\geq 0}$ is said to be $k$-log-concave (resp. strictly $k$-log-concave) if the sequence $\mathcal{L}^j\left(\{a_n\}_{n\geq 0}\right)$ is nonnegative (resp. positive) for each $1\le j\le k$, and $\{a_n\}_{n\geq 0}$ is said to be $\infty$-log-concave if $\mathcal{L}^k\left(\{a_n\}_{n\geq 0}\right)$ is nonnegative for any $k\ge 1$.
The (strict) $k$-log-convexity is defined in a similar manner.
The notion of infinite log-concavity was introduced by Boros and Moll \cite{Boros-Moll-2004}.
We see that \eqref{equ-Briggs-1} can be reformulated as
\begin{align}\label{equ-Griggs-1-reformulation}
	\left|\begin{matrix}
		a_{k+1} & a_{k+2} & a_{k+3}\\
		a_{k} & a_{k+1} & a_{k+2}\\
		a_{k-1} & a_{k} & a_{k+1}
	\end{matrix}\right|>0.
\end{align}
By Newton's inequality, if $f(x)=\sum_{k=0}^n a_kx^k$ has only real zeros, then the sequence $\{a_k\}_{k\geq 0}$ is strictly
log-concave. Due to the positivity and log-concavity of the coefficients $a_k$, the inequality \eqref{equ-Griggs-1-reformulation}
amounts to say that the sequence $\{a_k\}_{k\geq 0}$ is 2-log-concave.
It is also natural to study \eqref{equ-Briggs-1} and \eqref{equ-Briggs-2} for infinite sequences $\{a_k\}_{k\geq 0}$.
Notably, the 2-log-concavity property was verified for various sequences. For instance, Hou and Zhang \cite{HZ-19} and Jia and Wang \cite{JW-20} independently proved it for the partition function $\{p(i)\}_{i\geq 222}$.
Mukherjee \cite{Mukherjee-22}  explored  the sequence of overpartition functions  $\{\overline{p}(i)\}_{i\geq 42}$.
Yang \cite{Yang-23-12} investigated the sequences of differences $\{p(i)-p(i-1)\}_{i\geq 347}$ and $\{\overline{p}(i)-\overline{p}(i-1)\}_{i\geq 64}$. Yang \cite{Yang-23-5} also examined the sequence of broken $k$-diamond partition function $\{\Delta_k(i)\}_{i\geq 20}$ for $k=1$ or $2$.

It is known that a real-rooted polynomial with nonnegative coefficients, more generally a multiplier sequence, is 2-log-concave due to Aissen, Edrei,  Schoenberg and Whitney \cite{AESW-51}. See also Craven and Csordas \cite[Theorem 2.13]{Craven-Csordas1989} for the case of multiplier sequence.
Hence, the conjectured inequality \eqref{equ-Briggs-1} for a real-rooted polynomial with positive coefficients is already established by allowing equality.
In fact, the strict inequality  can also be proved by using Newton's inequality and Br\"and\'en's theorem \cite{Branden} (see also \cite[Theorem 1.2]{Chen-Dou-Yang}).

However, the conjectured inequality \eqref{equ-Briggs-2} for a real-rooted polynomial with positive coefficients is still open. From now on we will call  \eqref{equ-Briggs-2} the \emph{Briggs inequality}.
Recently, Liu and Zhang \cite{Liu-Zhang} proved the Briggs inequality for both the partition function sequence $\{p(i)\}_{i\geq 114}$ and the overpartition function sequence $\{\overline{p}(i)\}_{i\ge 18}$. The objective of this paper is to prove the Briggs inequality for the Boros-Moll
sequence, the normalized Boros-Moll sequence and the transposed Boros-Moll sequence, which we shall recall below.

The Boros-Moll
sequences arise in the study of the following  quartic integral
$$
\int_0^\infty\frac{1}{(t^4+2xt^2+1)^{m+1}} dt
$$
for $x>-1$ and $m\in\mathbb{N}$.
Boros and Moll \cite{Boros-Moll-1999a, Boros-Moll-2001}
proved that this integral is equal to
$\frac{\pi}{2^{m+3/2}(x+1)^{m+1/2}} P_m(x)$, where
\begin{align}\label{eq:B-M-poly2}
P_m(x)
=2^{-2m}\sum_{k=0}^m 2^k \binom{2m-2k}{m-k}\binom{m+k}{k} (x+1)^k.
\end{align}

The coefficients of $x^i$ in $P_m(x)$, denoted by $d_i(m)$, are called the Boros-Moll numbers. The polynomials $P_m(x)$ are called the Boros-Moll polynomials, and the sequences $\{d_i(m)\}_{i=0}^m$ are called the Boros-Moll sequences. Clearly, one sees from \eqref{eq:B-M-poly2} that
\begin{align}\label{eq:B-M-seq}
d_i(m)=2^{-2m}\sum_{k=i}^m 2^k \binom{2m-2k}{m-k}\binom{m+k}{k}\binom{k}{i}
\end{align}
for $0\le i \le m$.
See \cite{Amdeberhan-Moll, Boros-Moll-1999b, Boros-Moll-1999c, Boros-Moll-2001, Boros-Moll-2004, Chen-Gu2009, Chen-Wang-Xia, Guo2022, HMY, Moll2002} for more information on these sequences.

Boros and Moll \cite{Boros-Moll-1999b} showed that the sequence $\{d_i(m)\}_{i=0}^m$ is unimodal with the maximum term located in the middle for $m\ge 2$,
see also \cite{AMABKMR, Boros-Moll-1999c}.
Moll \cite{Moll2002} conjectured a stronger property that the sequences $\{d_i(m)\}_{i=0}^m$ are log-concave, which was proved by Kauers and Paule \cite{Kauers-Paule} with a computer algebra method. Chen, Pang and Qu \cite{Chen-Pang-Qu-2012}  also gave a combinatorial proof for this conjecture by building a structure of partially $2$-colored permutations.
Boros and Moll \cite{Boros-Moll-2004} also conjectured that the sequence $\{d_i(m)\}_{i=0}^m$ is $\infty$-log-concave, and this conjecture is still open.

Motivated by Newton's inequality and the infinite log-concavity conjecture on binomial numbers proposed by Boros and Moll \cite{Boros-Moll-2004}, Fisk \cite{Fisk}, McNamara and Sagan \cite{McNamara-Sagan} and Stanley (see \cite{Branden}) independently gave a general conjecture, which states that if a polynomial $\sum_{k=0}^n a_k x^k$ has only real and negative zeros, then so does the polynomial $\sum_{k=0}^n (a_k^2-a_{k-1}a_{k+1}) z^k$ where $a_{-1}=a_{n+1}=0$. This conjecture was confirmed by Br\"{a}nd\'{e}n \cite{Branden}.
Br\"{a}nd\'{e}n's Theorem provides an approach to $\infty$-log-concavity of a sequence by relating real-rooted polynomials to higher-order log-concavity.

Although, as shown by Boros and Moll \cite{Boros-Moll-1999b}, the polynomials $P_m(x)$ are not real-rooted in general, Br\"{a}nd\'{e}n introduced two polynomials
\begin{align*}
Q_m(x)=\sum_{i=0}^m \frac{d_i(m)}{i !}x^i, \quad
R_m(x)=\sum_{i=0}^m \frac{d_i(m)}{(i+2)!}x^i,\quad m\ge 1,
\end{align*}
derived from $P_m(x)$ and conjectured their real-rootedness \cite[Conjectures 8.5 \& 8.6]{Branden}.
Br\"{a}nd\'{e}n's conjectures were proved by Chen, Dou and Yang \cite{Chen-Dou-Yang}.
As noted by Br\"{a}nd\'{e}n \cite{Branden}, based on two theorems of Craven and Csordas \cite{Craven-Csordas2002} on iterated Tur\'{a}n inequalities, the real-rootedness of $Q_m(x)$ and $R_m(x)$ imply, respectively, the $2$-log-concavity and the $3$-log-concavity of $P_m(x)$.
In another direction, Chen and Xia \cite{Chen-Xia2} showed an analytic proof of the $2$-log-concavity of the Boros-Moll sequences by founding an intermediate function
so that the quartic inequalities for the $2$-log-concavity is reduced to quadratic inequalities.

Since $d_i(m)$ has two parameters, it is natural to consider properties for the sequences $\{d_i(m)\}_{m\ge i}$, which were called \emph{transposed Boros-Moll sequences} in \cite{Zhao2023-2} where the extended ultra log-concavity and the asymptotic ratio-log-convexity for $\{d_i(m)\}_{m\ge i}$ were proved, and the asymptotic log-convexity for $\{\sqrt[n]{d_i(i+n)}\}_{n\ge i^2}$ was also obtained. Note that recently, Jiang and Wang \cite{Jiang-Wang} proved the $2$-log-concavity, the higher order Tur\'{a}n inequality and the Laguerre inequality of lower order for the sequences $\{d_i(m)\}_{m\ge i}$.

As mentioned above, both
the Boros-Moll sequence $\{d_i(m)\}_{i=0}^m$ and its transpose $\{d_i(m)\}_{m\ge i}$ are 2-log-concave, then both of them satisfy \eqref{equ-Briggs-1}. This motivated us to study whether they also satisfy the Briggs inequality \eqref{equ-Briggs-2}. On the other hand, since
Briggs' conjecture on \eqref{equ-Briggs-2} is still open for real polynomials with only negative zeros, it is also desirable to know whether $Q_m(x)$ and $R_m(x)$ satisfy
the Briggs inequality. For this purpose, we define the \emph{normalized Boros-Moll sequence} $\{\tilde{d}_{i,k}(m)\}_{i=1}^m$ as $\{{d}_i(m)/(i+k)!\}_{i=1}^m$ for any $k\geq 0$.
Our main results are stated as follows.

\begin{theorem}\label{Thm:Briggs-ieq}
For each $m\ge 2$, the Boros-Moll sequence $\{d_i(m)\}_{i=1}^m$ satisfies the Briggs inequality. That is, for any $m\ge 2$ and $1\le i \le m-1$,
\begin{align}\label{ieq:Briggs-BM}
  d_{i}^2(m)(d_{i}^2(m)-d_{i-1}(m)d_{i+1}(m))
>d_{i-1}^2(m)(d_{i+1}^2(m)-d_{i}(m)d_{i+2}(m)).
\end{align}
\end{theorem}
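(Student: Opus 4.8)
The plan is to recast \eqref{ieq:Briggs-BM} as a two-variable inequality in the consecutive log-concavity ratios and then squeeze it between the known bounds for those ratios. For $1\le i\le m$ introduce
\[
 r_i:=\frac{d_{i-1}(m)\,d_{i+1}(m)}{d_i^2(m)},
\]
with the convention $r_m=0$ forced by $d_{m+1}(m)=0$. All $d_i(m)$ are positive, and rewriting both sides of \eqref{ieq:Briggs-BM} through $r_i$ and $r_{i+1}$---using $d_i(m)d_{i+2}(m)=r_{i+1}\,d_{i+1}^2(m)$ and $d_{i-1}^2(m)d_{i+1}^2(m)=r_i^2\,d_i^4(m)$---shows the left-hand side equals $d_i^4(m)(1-r_i)$ and the right-hand side equals $r_i^2\,d_i^4(m)(1-r_{i+1})$. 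Cancelling the common positive factor $d_i^4(m)$, the inequality \eqref{ieq:Briggs-BM} is equivalent to the clean form
\[
 1-r_i>r_i^2\,(1-r_{i+1}),\qquad 1\le i\le m-1 .
\]
For the endpoint $i=m-1$ this reads $1-r_{m-1}>r_{m-1}^2$, i.e.\ $r_{m-1}<(\sqrt5-1)/2$, so the reduction already isolates a single scalar condition there.

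By the strict log-concavity of the Boros--Moll sequence we have $0<r_i<1$ for $1\le i\le m-1$, and the map $(r_i,r_{i+1})\mapsto r_i^2(1-r_{i+1})$ is increasing in $r_i$ and decreasing in $r_{i+1}$. Thus the extremal configuration is $r_i$ as large and $r_{i+1}$ as small as the sequence permits, which is exactly where the external estimates enter. Writing $\ell_j\le r_j\le u_j$ for the Chen--Gu lower bound and the Zhao upper bound, respectively (the reverse ultra-log-concavity of Chen and Gu supplying the representative form $\ell_j=\tfrac{j(m-j)}{(j+1)(m-j+1)}$, so that $1-\ell_j=\tfrac{m+1}{(j+1)(m-j+1)}$), I would bound the left side below by $1-u_i$ and the right side above by $u_i^2(1-\ell_{i+1})$. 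This reduces the theorem to the single rational inequality
\[
 1-u_i>u_i^2\,(1-\ell_{i+1}),\qquad 1\le i\le m-1 .
\]

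What then remains is to verify this last inequality for all $m\ge 2$ and $1\le i\le m-1$ by substituting the explicit forms of $u_i$ and $\ell_{i+1}$, clearing denominators, and reducing to a polynomial inequality in $i$ and $m$; I expect this to be the main obstacle. The difficulty concentrates near the middle of the sequence, $i\approx m/2$, where both $u_i$ and $\ell_{i+1}$ tend to $1$ (indeed $1-\ell_j=O(1/m)$ there) and the factor $u_i^2$ is close to $1$, so the margin between the two sides is thin and one must exploit the precise gaps $1-u_i$ and $1-\ell_{i+1}$ rather than crude estimates such as $u_i^2<1$. Near the upper endpoint, by contrast, the factor $u_i^2$ sits well below $1$ (since $r_{m-1}\approx 1/2$) and the inequality has ample room. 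A final technical point is that the Chen--Gu and Zhao bounds may require $i$ to avoid the extreme indices, so the small cases---notably $i=1$, $i=m-1$, and small values of $m$---would be checked directly from \eqref{eq:B-M-seq} or from the reduced scalar conditions above.
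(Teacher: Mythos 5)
Your proposal reproduces the paper's proof essentially step for step: the same reformulation of \eqref{ieq:Briggs-BM} as $1-r_i>r_i^2(1-r_{i+1})$ (the paper's $(u_{i+1}(m)-1)u_i(m)-1+1/u_i(m)>0$ after multiplying through by $u_i(m)$), the same pairing of the Chen--Gu bound as the lower estimate and the Zhao bound as the upper estimate on the ratio $r_i$, the same monotonicity reduction to the single rational inequality $1-g_i(m)>g_i^2(m)\bigl(1-f_{i+1}(m)\bigr)$, and the same endpoint treatment of $i=m-1$ via $u_m(m)=f_m(m)=0$. The one step you defer as ``the main obstacle'' in fact goes through without delicacy: after clearing denominators, the paper exhibits the numerator as a polynomial $A$ in $i$ and $m$ whose terms are all manifestly positive for $1\le i\le m-1$, so no special care near $i\approx m/2$ or at $i=1$ is required.
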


\begin{theorem}\label{Thm:Briggs-ieq-k}
For any $k\geq 0$ and $m\ge 2$, the normalized Boros-Moll sequence $\{\tilde{d}_{i,k}(m)\}_{i=1}^m$ satisfies the Briggs inequality. That is,
for any $k\geq 0$, $m\ge 2$ and $1\le i \le m-1$,
\begin{align}\label{ieq:Briggs-BM-k}
  \tilde{d}_{i,k}^2(m)\big(\tilde{d}_{i,k}^2(m)
   -\tilde{d}_{i-1,k}(m)\tilde{d}_{i+1,k}(m)\big)
 >\tilde{d}_{i-1,k}^2(m)\big(\tilde{d}_{i+1,k}^2(m)
   -\tilde{d}_{i,k}(m)\tilde{d}_{i+2,k}(m)\big).
\end{align}
\end{theorem}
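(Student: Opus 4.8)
The plan is to recast the Briggs inequality as a statement about a single ratio sequence and then feed in known one-sided bounds. Abbreviating $a_i=\tilde{d}_{i,k}(m)$ and writing $u_i=a_{i-1}a_{i+1}/a_i^2$ for the log-concavity ratio of the normalized sequence, I divide \eqref{ieq:Briggs-BM-k} by $a_i^4$ and use $a_{i+1}^2-a_ia_{i+2}=a_{i+1}^2(1-u_{i+1})$ to rewrite the Briggs inequality in the equivalent, scale-invariant form
\begin{align}\label{eq:briggs-ratio}
1-u_i>u_i^2\left(1-u_{i+1}\right),\qquad 1\le i\le m-1.
\end{align}
For $i=m-1$ one has $a_{m+1}=0$, hence $u_m=0$, and \eqref{eq:briggs-ratio} becomes $1-u_{m-1}>u_{m-1}^2$; thus the boundary case is automatically included. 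A direct computation relates $u_i$ to the unnormalized ratio $r_i=d_{i-1}(m)d_{i+1}(m)/d_i^2(m)$ by $u_i=r_i\,(i+k)/(i+k+1)$, so every bound for $r_i$ transfers to $u_i$ after multiplication by this explicit, $k$-dependent factor.

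Next I record the monotonicity of the two-variable function $g(x,y)=1-x-x^2(1-y)$. Since $\partial g/\partial y=x^2\ge0$, and since $\partial g/\partial x=-1-2x(1-y)\le-1<0$ whenever $x\ge0$ and $y\le1$ (the latter holding because $u_i\le r_i\le1$ by log-concavity of $\{d_i(m)\}$), the function $g$ is strictly increasing in $y$ and strictly decreasing in $x$ on the relevant domain. Consequently \eqref{eq:briggs-ratio}, namely $g(u_i,u_{i+1})>0$, follows as soon as we exhibit an upper bound $u_i\le U_i$ and a lower bound $u_{i+1}\ge L_{i+1}$ for which $g(U_i,L_{i+1})>0$; indeed $g(u_i,u_{i+1})\ge g(U_i,u_{i+1})\ge g(U_i,L_{i+1})$ by the two monotonicities in turn. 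For $L_{i+1}$ I use the Chen--Gu reverse ultra-log-concavity bound, which in ratio form reads $r_{i+1}\ge\frac{(i+1)(m-i-1)}{(i+2)(m-i)}$; for $U_i$ I use Zhao's upper bound for $r_i$. Multiplying these by the factors $(i+1+k)/(i+2+k)$ and $(i+k)/(i+k+1)$ respectively produces $L_{i+1}$ and $U_i$ for the normalized sequence.

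The remaining step, which I expect to be the \emph{main obstacle}, is to verify $g(U_i,L_{i+1})>0$, that is
\begin{align}
1-U_i>U_i^2\left(1-L_{i+1}\right),
\end{align}
as an inequality between rational functions of $i$, $m$ and $k$ over the range $1\le i\le m-1$, $m\ge2$, $k\ge0$. Clearing denominators reduces this to a polynomial positivity statement, and the genuine difficulty is its uniformity in $k$: the factor $(i+k)/(i+k+1)$ inflates $U_i$, which works against \eqref{eq:briggs-ratio}, whereas the factor $(i+1+k)/(i+2+k)$ inflates $L_{i+1}$, which works in its favour, so the two effects compete and one cannot simply specialize $k$. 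I would therefore treat the cleared polynomial as a polynomial in $k$ and establish its positivity for all $k\ge0$ by isolating the leading coefficient in $k$ and controlling the lower-order terms, the tightest regime being $i$ close to $m-1$, where the slack in \eqref{eq:briggs-ratio} is smallest. Finally, since the bounds of Chen--Gu and Zhao may require $m$ above a small threshold, I would settle the finitely many remaining small pairs $(m,i)$ by direct evaluation, completing the argument for every $k\ge0$ and thereby covering $Q_m(x)$ and $R_m(x)$ as the special cases $k=0$ and $k=2$.
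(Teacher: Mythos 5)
Your proposal follows essentially the same route as the paper: the same scale-invariant reformulation $1-u_i>u_i^2(1-u_{i+1})$, the same transfer of the Chen--Gu lower bound and Zhao's upper bound to the normalized sequence via the factor $(i+k)/(i+k+1)$, and the same monotone substitution of these bounds reducing the claim to the positivity of a single rational function of $i$, $m$, $k$. The step you flag as the main obstacle is exactly what the paper executes, and it goes through just as you anticipate: after clearing denominators the numerator is a cubic $B_0+B_1k+B_2k^2+Ak^3$ in $k$ whose coefficients are all positive for $1\le i\le m-1$ and $m\ge 2$ (with only $B_0$ needing a small decomposition $B_0=(m-i)C+D$), so positivity holds uniformly in $k\ge 0$ with no leftover small cases, the boundary $i=m-1$ being absorbed since $u_m=0$ coincides with the Chen--Gu bound there.
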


\begin{theorem}\label{Thm:Briggs-ieq-tr}
For each $i\ge 1$, the transposed Boros-Moll sequence $\{d_i(m)\}_{m\ge i}$ satisfies the Briggs inequality. That is, for any $i\ge 1$ and $m\ge i+1$,
\begin{align}\label{ieq:Briggs-trBM}
  d_{i}^2(m)(d_{i}^2(m)-d_{i}(m-1)d_{i}(m+1))
>d_{i}^2(m-1)(d_{i}^2(m+1)-d_{i}(m)d_{i}(m+2)).
\end{align}
However, for $i=0$, the inverse relation of \eqref{ieq:Briggs-trBM} holds.
\end{theorem}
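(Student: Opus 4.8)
The plan is to fix $i\ge 1$ and recast the transposed Briggs inequality \eqref{ieq:Briggs-trBM} entirely in terms of the consecutive ratios $q_m:=d_i(m+1)/d_i(m)$. Dividing \eqref{ieq:Briggs-trBM} by $d_i^2(m-1)d_i^2(m)>0$ and rewriting the two sides through $q_{m-1},q_m,q_{m+1}$, the left side becomes $q_{m-1}(q_{m-1}-q_m)$ and the right side $q_m(q_m-q_{m+1})$, so that \eqref{ieq:Briggs-trBM} is equivalent to
\[
q_m(q_{m+1}-q_m)>q_{m-1}(q_m-q_{m-1}),
\]
i.e.\ to the statement that $g(m):=q_m(q_{m+1}-q_m)$ is strictly increasing. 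I would then deduce this from two monotonicity properties of $\{q_m\}_{m\ge i}$: the strict log-concavity of the transposed sequence $\{d_i(m)\}_{m\ge i}$, which says exactly that $\{q_m\}$ is strictly decreasing, and its strict \emph{ratio-log-convexity}, $q_m^2<q_{m-1}q_{m+1}$, which says exactly that $q_{m+1}/q_m$ is strictly increasing.

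Granting these two facts the reduction is immediate. Writing $g(m)=-q_m^2\bigl(1-q_{m+1}/q_m\bigr)$, strict log-concavity gives $0<q_{m+1}<q_m$, so $q_m^2$ is positive and strictly decreasing and $1-q_{m+1}/q_m\in(0,1)$, while strict ratio-log-convexity gives $q_{m+1}/q_m>q_m/q_{m-1}$, so $1-q_{m+1}/q_m$ is positive and strictly decreasing. Thus $q_m^2\bigl(1-q_{m+1}/q_m\bigr)$ is a product of two positive strictly decreasing sequences, hence strictly decreasing, and therefore $g$ is strictly increasing, which is \eqref{ieq:Briggs-trBM}. Log-concavity of $\{d_i(m)\}_{m\ge i}$ for $i\ge1$ is available (indeed $2$-log-concavity) from Jiang and Wang \cite{Jiang-Wang}, and it will in any case drop out of the ratio bounds below, so the whole burden falls on the strict ratio-log-convexity.

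To prove strict ratio-log-convexity I would start from the contiguous relation
\[
d_i(m+1)=\frac{m+i}{m+1}\,d_{i-1}(m)+\frac{4m+2i+3}{2(m+1)}\,d_i(m),
\]
which gives $q_m=\frac{m+i}{m+1}r_m+\frac{4m+2i+3}{2(m+1)}$ with the diagonal ratio $r_m:=d_{i-1}(m)/d_i(m)$. The goal $q_m^2<q_{m-1}q_{m+1}$ is equivalent to the strict convexity of $\ln q_m$, and equivalently to the quartic inequality $d_i^3(m+1)d_i(m-1)<d_i^3(m)d_i(m+2)$. The strategy is to sandwich $q_m$ between sharp explicit rational bounds of the form $q_m=2+\varepsilon_m$ with $\varepsilon_m>0$ of order $1/m$ and of the correct convexity, upgrading the asymptotic ratio-log-convexity of Zhao \cite{Zhao2023-2} to an estimate valid for every $m\ge i+1$; concretely, I would derive two-sided bounds for $r_m$ from the recurrence (together with the three-term recurrence in $i$), substitute them into the expression for $q_m$, and reduce $q_m^2<q_{m-1}q_{m+1}$ to a polynomial inequality in $m$ and $i$.

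The main obstacle is exactly this last step: the bounds on $q_m$ must control the second-order (convexity) behaviour of $\ln q_m$, not merely its leading $1/m$ asymptotics, and the slow, non-monotone way in which $m(q_m-2)$ approaches its limit—already visible for $i=1$—shows that crude estimates will not close the argument; handling the diagonal ratio $r_m$ uniformly in $m$ and checking the finitely many small values of $m$ by hand is where the work concentrates. The remaining claims are short. For $i=0$ the term $d_{-1}$ vanishes, so $q_m=\frac{4m+3}{2(m+1)}=2-\frac{1}{2(m+1)}$; here $\{q_m\}$ is strictly increasing and $\ln q_m$ is strictly concave, so the same product argument runs in reverse and yields the reverse of \eqref{ieq:Briggs-trBM}, which may also be verified by a direct computation of $g(m)$. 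Finally, the strict log-convexity of $\{\sqrt[n]{d_i(i+n)}\}_{n\ge1}$ for $i\ge1$ follows from the strict ratio-log-convexity by the standard implication that ratio-log-convexity of a positive sequence forces log-convexity of its sequence of $n$-th roots, as used asymptotically in \cite{Zhao2023-2}.
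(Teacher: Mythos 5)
Your reduction of \eqref{ieq:Briggs-trBM} to ratio form is correct, but it is not a new route: dividing by $d_i^2(m-1)d_i^2(m)$ and running the product-of-two-positive-decreasing-sequences argument on $q_m^2\bigl(1-q_{m+1}/q_m\bigr)$ is exactly the paper's Theorem \ref{thm:2-r-Briggs} (strict log-concavity plus strict ratio-log-convexity imply the Briggs inequality), and strict log-concavity is indeed available from \cite{Jiang-Wang} or \cite{Zhao2023-2}. The genuine gap is that the strict ratio-log-convexity of $\{d_i(m)\}_{m\ge i}$ for \emph{every} $i\ge 1$ and $m\ge i+2$ --- which you correctly identify as carrying the whole burden, since Zhao's result in \cite{Zhao2023-2} is exact only for $i\le 135$ and asymptotic beyond --- is never actually proved. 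What you offer is a plan (sandwich $q_m$ via two-sided bounds on the diagonal ratio $r_m=d_{i-1}(m)/d_i(m)$ extracted from the contiguous relation \eqref{eq:reclm1}), immediately followed by your own admission that rational bounds of order $1/m$ will not control the second-order convexity behaviour of $\ln q_m$; no candidate bounds are written down, so the argument cannot be checked or completed as it stands. The paper closes precisely this gap by a different mechanism (Theorem \ref{thm:st-r-lvx}): it works with the three-term Kauers--Paule recurrence in $m$ alone, $d_i(n)=\mathfrak{a}(n)d_i(n-1)+\mathfrak{b}(n)d_i(n-2)$, applies the Sun--Zhao criterion (Theorem \ref{thm-crlx}) with the test function $g(n)=L(n-1,i)$ built from Zhao's lower bound \eqref{ineq:lb-dlm1m} --- a bound containing a square-root term, which is exactly what supplies the second-order precision your proposed rational bounds would lack --- and then verifies conditions $(i)$--$(iii)$ by explicit polynomial positivity checks valid uniformly for $n\ge i+2$. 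Until bounds of comparable sharpness are produced and the resulting polynomial inequalities are actually verified, the main statement for $i\ge 1$ remains unproved.

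A secondary flaw: for $i=0$ your formula $q_m=(4m+3)/(2(m+1))$ is right, but the claim that ``the same product argument runs in reverse'' is not. Here $q_m^2$ is positive and strictly \emph{increasing} while $q_{m+1}/q_m-1$ is positive and strictly decreasing (one checks $q_m^2-q_{m-1}q_{m+1}=(8m+7)/\bigl(4m(m+1)^2(m+2)\bigr)>0$, i.e.\ ratio-log-concavity), and a product of an increasing and a decreasing positive sequence need not be monotone, so the structure of Theorem \ref{thm:2-r-Briggs} does not simply reverse. Your fallback of direct computation does work and is what the paper does: with $r_0(m)=(4m-1)/(2m)$ one finds $\delta_0(m)=-(8m^2+5m-2)/\bigl(4m^2(m+1)^2(m+2)\bigr)<0$; equivalently, in your notation, $q_{m+1}-q_m=1/\bigl(2(m+1)(m+2)\bigr)$ gives $g(m)=(4m+3)/\bigl(4(m+1)^2(m+2)\bigr)$, which is manifestly strictly decreasing. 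So the $i=0$ case is salvageable by a one-line computation, but the substantive missing piece remains the uniform strict ratio-log-convexity for $i\ge 1$.
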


In the remainder of this paper, we first complete the proofs of Theorems \ref{Thm:Briggs-ieq} and \ref{Thm:Briggs-ieq-k}, in Section \ref{Sec:2}, by employing a lower bound for $(d_{i-1}(m)d_{i+1}(m))/d_i^2(m)$ given by Chen and Gu \cite{Chen-Gu2009} and an upper bound given by Zhao \cite{Zhao2023}.
In Section \ref{Sec:3} we establish the strict ratio-log-convexity of the transposed Boros-Moll sequence $\{d_i(m)\}_{m\ge i}$, which is the key ingredient of the proof of Theorem \ref{Thm:Briggs-ieq-tr}.
The complete proof of Theorem \ref{Thm:Briggs-ieq-tr} will be presented in Section \ref{Sec:4}.
At last, in Section \ref{Sec:5}, we derive the strict log-convexity of the sequence $\{\sqrt[n]{d_i(i+n)}\}_{n\ge 1}$ for each $i\ge 1$ from the strict ratio-log-convexity of $\{d_i(m)\}_{m\ge i}$.

\section{Proofs of Theorems \ref{Thm:Briggs-ieq} and \ref{Thm:Briggs-ieq-k}}\label{Sec:2}

This section is devoted to  proving the Briggs inequality for the Boros-Moll sequence $\{{d}_i(m)\}_{i=1}^m$ and the normalized Boros-Moll sequence $\{\tilde{d}_{i,k}(m)\}_{i=1}^m$.

Let us first prove Theorem \ref{Thm:Briggs-ieq}.
In order to do so, we need proper bounds for $(d_{i-1}(m)d_{i+1}(m))/d_i^2(m)$. Chen and Gu \cite{Chen-Gu2009} established the following inequality in studying the reverse ultra log-concavity of the Boros-Moll polynomials.
\begin{theorem}$($\cite[Theorem 1.1]{Chen-Gu2009}$)$\label{thm:Chen-Gu-1}
For $m\ge 2$ and $1\le i \le m-1$, there holds
\begin{align}\label{eq:ub-mell2}
\frac{d_i(m)^2}{d_{i-1}(m)d_{i+1}(m)}
<\frac{(m-i+1)(i+1)}{(m-i)i}.
\end{align}
\end{theorem}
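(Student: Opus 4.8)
The plan is to strip away the binomial clutter by recognizing the claimed bound as a disguised form of \emph{reverse ultra-log-concavity}. A direct computation gives
\[
\frac{\binom{m}{i}^2}{\binom{m}{i-1}\binom{m}{i+1}}=\frac{(m-i+1)(i+1)}{(m-i)i},
\]
so, after cross-multiplying, the asserted inequality is equivalent to $c_i(m)^2<c_{i-1}(m)c_{i+1}(m)$ for the normalized numbers $c_i(m):=d_i(m)/\binom{m}{i}$. In other words, the single clean statement to prove, for each fixed $m\ge 2$, is that the sequence $\{d_i(m)/\binom{m}{i}\}_{i=0}^m$ is strictly log-convex, and I would work exclusively with this reformulation.

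Next I would bring in a recurrence in $m$. One verifies (by creative telescoping applied to \eqref{eq:B-M-seq}, or directly against small cases) the identity
\[
d_i(m+1)=\frac{4m+2i+3}{2(m+1)}\,d_i(m)+\frac{m+i}{m+1}\,d_{i-1}(m).
\]
Using $\binom{m}{i}/\binom{m+1}{i}=(m+1-i)/(m+1)$ and $\binom{m}{i-1}/\binom{m+1}{i}=i/(m+1)$, this transforms in the normalized variables into
\[
c_i(m+1)=\frac{(4m+2i+3)(m+1-i)}{2(m+1)^2}\,c_i(m)+\frac{i(m+i)}{(m+1)^2}\,c_{i-1}(m),
\]
an operator of the shape $c_i(m+1)=A_i\,c_i(m)+B_i\,c_{i-1}(m)$ with $A_i,B_i>0$. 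The strategy is then induction on $m$: the base case $m=2$ reduces to the single check $c_1(2)^2=(15/8)^2<(21/8)(3/2)=c_0(2)c_2(2)$, and the inductive step asks that this ``shift-and-average'' operator carry a strictly log-convex normalized sequence to a strictly log-convex one.

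The crux is exactly this preservation step, and I expect it to be the main obstacle, since log-convexity is a quadratic condition that is not automatically inherited through a recurrence mixing two consecutive indices. I would address it by strengthening the induction hypothesis to carry sharp two-sided control on the consecutive ratios $r_i(m):=d_{i+1}(m)/d_i(m)$, and then showing that these ratio bounds simultaneously (i) force the desired $c_i(m+1)^2<c_{i-1}(m+1)c_{i+1}(m+1)$ once the recurrence is substituted and denominators are cleared, and (ii) reproduce themselves at level $m+1$. Selecting auxiliary bounds tight enough to separate the two sides yet stable under the operator is the delicate point; the natural inputs are the known log-concavity of $\{d_i(m)\}$ (so that $r_i(m)$ is decreasing) together with monotonicity estimates for $r_i(m)$ of the Chen--Xia type.

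As a fallback, should the inductive propagation prove unwieldy, I would bypass the induction altogether: note that the target is equivalent to $r_i(m)/r_{i-1}(m)>i(m-i)/\big((i+1)(m-i+1)\big)$, insert explicit upper and lower bounds for $r_i(m)$, and verify the resulting rational inequality in $m$ and $i$. Either route hinges on the same hard point, namely obtaining ratio estimates sharp enough to separate the sides, which is genuinely necessary because the inequality degenerates to an equality in the ``binomial limit'' where $d_i(m)$ is replaced by $\binom{m}{i}$.
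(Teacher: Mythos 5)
First, a point of reference: the paper does not prove this statement at all --- it is quoted verbatim from Chen and Gu \cite{Chen-Gu2009}, so there is no internal proof to compare against, and your attempt has to be measured against Chen--Gu's published argument. Your opening reduction is exactly right, and is in fact how Chen and Gu themselves conceived the result: since $\binom{m}{i}^2/\bigl(\binom{m}{i-1}\binom{m}{i+1}\bigr)=(m-i+1)(i+1)/\bigl((m-i)i\bigr)$, the inequality \eqref{eq:ub-mell2} is precisely the strict log-convexity of $\{d_i(m)/\binom{m}{i}\}_{i}$, i.e.\ the ``reverse ultra log-concavity'' that gives their paper its title. Your normalized recurrence is a correct rewriting of the Kauers--Paule relation \eqref{eq:reclm1}, and your base case is numerically correct ($c_1(2)^2=225/64<63/16=c_0(2)c_2(2)$).

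Nevertheless there is a genuine gap: everything that makes the theorem hard is deferred. The preservation step --- that the operator $c_i(m+1)=A_i\,c_i(m)+B_i\,c_{i-1}(m)$ with $i$-dependent positive coefficients carries strict log-convexity of $\{c_i(m)\}_i$ to level $m+1$ --- is not automatic and is not established; you acknowledge this yourself, but acknowledging the crux is not closing it. Worse, the inputs you propose to feed the strengthened induction either point the wrong way or are circular. Log-concavity of $\{d_i(m)\}_i$ gives $u_i:=d_{i-1}(m)d_{i+1}(m)/d_i^2(m)\le 1$, an \emph{upper} bound on $u_i$, whereas \eqref{eq:ub-mell2} is the \emph{lower} bound $u_i>i(m-i)/\bigl((i+1)(m-i+1)\bigr)$; so the monotonicity of $r_i=d_{i+1}(m)/d_i(m)$ contributes nothing in the needed direction. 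And your fallback --- insert two-sided bounds on $r_i(m)$ into $u_i=r_i/r_{i-1}$ --- requires a lower bound on $r_i$ and an upper bound on $r_{i-1}$ whose quotient still exceeds $i(m-i)/\bigl((i+1)(m-i+1)\bigr)$, i.e.\ ratio estimates sharper than the binomial limit in which the inequality degenerates to equality. Producing such an estimate is exactly the content of Chen and Gu's key lemma (a sharp bound on $d_{i-1}(m)/d_i(m)$ proved by induction on $m$ from the two Kauers--Paule recurrences), and the bounds of Chen--Xia type \cite{Chen-Xia} that you invoke are not shown to suffice. So the proposal correctly identifies the right reformulation and the right general strategy --- essentially Chen--Gu's own route --- but as written it is a plan whose decisive analytic step is missing, not a proof.
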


The following result was obtained by Zhao \cite{Zhao2023}, which was used to give a new proof of the higher order Tur\'{a}n inequality for the Boros-Moll sequence.

\begin{theorem}$($\cite[Theorem 3.1]{Zhao2023}$)$\label{thm:sharper-bd}
For each $m\ge 2$ and $1\le  i \le m-1$, we have
\begin{align}\label{eq:lb-mell1}
\frac{d_i^2(m)}{d_{i-1}(m)d_{i+1}(m)}
>\frac{(m-i+1)(i+1)(m+i^2)}{(m-i)i(m+i^2+1)}.
\end{align}
\end{theorem}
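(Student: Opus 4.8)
The plan is to reduce the two-variable estimate \eqref{eq:lb-mell1} to a one-variable inequality for the consecutive ratio $q_i := d_i(m)/d_{i-1}(m)$. The tool is the pure-$i$ three-term recurrence
\begin{align*}
i(i+1)\,d_{i+1}(m) = (2m+1)\,i\,d_i(m) - (m-i+1)(m+i)\,d_{i-1}(m),
\end{align*}
which one verifies directly from \eqref{eq:B-M-seq}. Dividing by $d_i(m)$ gives the Riccati-type relation $q_{i+1} = \frac{2m+1}{i+1} - \frac{(m-i+1)(m+i)}{i(i+1)}\cdot\frac{1}{q_i}$, and since $d_i^2(m)/\big(d_{i-1}(m)d_{i+1}(m)\big) = q_i/q_{i+1}$, substituting the recurrence yields the exact identity
\begin{align*}
\frac{d_i^2(m)}{d_{i-1}(m)\,d_{i+1}(m)} = \phi_i(q_i),\qquad
\phi_i(t) := \frac{i(i+1)\,t^2}{(2m+1)\,i\,t-(m-i+1)(m+i)}.
\end{align*}
Thus \eqref{eq:lb-mell1} is equivalent to $\phi_i(q_i) > B_i$, where $B_i$ denotes the right-hand side of \eqref{eq:lb-mell1}.

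Two features of $\phi_i$ guide the argument. First, positivity of $d_{i+1}(m)$ together with the recurrence shows that the denominator of $\phi_i(q_i)$ is automatically positive, i.e. $q_i > (m-i+1)(m+i)/\big((2m+1)i\big)$, so no sign issues arise. Second, on this range $\phi_i$ is convex with a unique minimum, whose value computes to $4(i+1)(m-i+1)(m+i)/\big((2m+1)^2 i\big)$; a direct comparison (after clearing denominators, $B_i$ minus this minimum reduces to a manifestly positive expression) shows the minimum lies strictly below $B_i$. Hence $\phi_i(q_i) > B_i$ cannot hold for every admissible argument, and the inequality survives only because $q_i$ sits on the decreasing branch, to the left of both roots of the quadratic $i(i+1)t^2 - B_i(2m+1)i\,t + B_i(m-i+1)(m+i)$. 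Equivalently, \eqref{eq:lb-mell1} follows once a sufficiently sharp upper bound $q_i < U_i$ is in hand, with $U_i$ small enough that this quadratic is positive at $t=U_i$.

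The crux, and the step I expect to be hardest, is producing such a sharp upper bound on $q_i$. The natural mechanism is induction on $i$ through the Riccati relation: its right-hand side is increasing in $q_i$, so a bound $q_i < U_i$ propagates to $q_{i+1} < \frac{2m+1}{i+1} - \frac{(m-i+1)(m+i)}{i(i+1)U_i}$, and it remains to design the family $\{U_i\}$ so that (a) the base case $q_1 = d_1(m)/d_0(m)$ obeys $q_1 < U_1$, (b) the propagation is consistent, $\frac{2m+1}{i+1} - \frac{(m-i+1)(m+i)}{i(i+1)U_i} \le U_{i+1}$, and (c) $U_i$ keeps the quadratic of the previous paragraph positive. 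The difficulty is one of precision: the factor $(m+i^2)/(m+i^2+1)$ that distinguishes $B_i$ from the Chen--Gu bound \eqref{eq:ub-mell2} is only a lower-order correction, so $U_i$ must be accurate to order $1/(m+i^2)$, and the coarser information in Theorem \ref{thm:Chen-Gu-1} will not suffice. In practice I would obtain the candidate $U_i$ by solving \eqref{eq:lb-mell1} as an equality for $q_i$, and then discharge (a)--(c) by clearing denominators and reducing each to a polynomial inequality in $m$ and $i$ valid for $m\ge 2$ and $1\le i\le m-1$; verifying the propagation step (b) and the positivity step (c) is where the bulk of the computation lies.
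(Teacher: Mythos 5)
Your algebraic setup is sound as far as it goes, and it is worth noting first that the paper itself contains no proof of Theorem \ref{thm:sharper-bd}: it is imported from \cite{Zhao2023}, where it is derived from the sharp $m$-direction ratio bound of Theorem \ref{thm:lb-ratdlm1m}, itself propagated by induction on $m$ through the recurrence \eqref{eq:rec-dim-m}. Your pure-$i$ recurrence
\begin{align*}
i(i+1)\,d_{i+1}(m)=(2m+1)\,i\,d_i(m)-(m-i+1)(m+i)\,d_{i-1}(m)
\end{align*}
is correct (it follows by eliminating $d_i(m+1)$ between \eqref{eq:reclm1} and the companion Kauers--Paule relation expressing $d_i(m+1)$ through $d_i(m)$ and $d_{i+1}(m)$; I checked it numerically at $(m,i)=(2,1)$ and $(4,2)$), the identity $d_i^2(m)/(d_{i-1}(m)d_{i+1}(m))=\phi_i(q_i)$ is exact, and your vertex analysis is right: the minimum $4(i+1)(m-i+1)(m+i)/((2m+1)^2 i)$ of $\phi_i$ lies below $B_i$, the difference after clearing denominators being a positive multiple of $4i^4+8i^2m+5i^2+m$ --- precisely the radicand in \eqref{eq:def-Lml}, which shows your reduction is the $i$-direction analogue of the $m$-direction reduction actually used in \cite{Zhao2023}.

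The genuine gap is that the quantitative heart --- the family $\{U_i\}$ and your conditions (a)--(c) --- is never constructed, and the candidate you name (take $U_i=t_-(i)$, the smaller root of $i(i+1)t^2-B_i(2m+1)i\,t+B_i(m-i+1)(m+i)$, i.e.\ solve \eqref{eq:lb-mell1} as an equality) provably fails the propagation step (b). Take $m=4$: then $t_-(1)=10/3$, and the Riccati map $F_1(t)=\tfrac92-\tfrac{10}{t}$ sends it to $\tfrac32$, whereas $t_-(2)=3-\sqrt{3}\approx 1.268<\tfrac32$. Worse, the admissible window is razor thin: $q_1=d_1(4)/d_0(4)=236/77\approx 3.0649$, while (b) and (c) together force $U_1\le 10/(\tfrac32+\sqrt{3})\approx 3.0940$, so any valid $U_1$ must approximate $q_1$ to within about one percent. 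The structural reason is that forward iteration in increasing $i$ does not contract (the derivative of the Riccati map, $(m-i+1)(m+i)/(i(i+1)q_i^2)$, is about $1.06$ at $(m,i)=(4,1)$ and about $2$ at $(4,2)$), while the target slack $t_-(i)-q_i$ collapses by an order of magnitude from $i=1$ ($\approx 0.27$) to $i=2$ ($\approx 0.031$). Hence your induction needs an a priori upper bound on $q_i$ sharp to the same order $1/(m+i^2)$ as the theorem itself at every step --- you acknowledge this, but that is essentially the theorem, and no such family is exhibited nor can it be the naive $t_-$. This instability is presumably why the published argument inducts in the $m$-direction, where the analogous sharp bound \eqref{ineq:lb-dlm1m} does propagate through \eqref{eq:rec-dim-m}. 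If you wish to stay in the $i$-direction, a backward induction in decreasing $i$, starting from the closed form $q_m=d_m(m)/d_{m-1}(m)=2/(2m+1)$, is more promising since the inverse Riccati map is the contracting branch; but as written the proposal is a program whose hardest step is unexecuted and whose stated plan for that step is refuted by the example above.
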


We are now in a stage to show a proof of Theorem \ref{Thm:Briggs-ieq}.

\begin{proof}[Proof of Theorem \ref{Thm:Briggs-ieq}]
By \eqref{eq:B-M-seq}, it is clear that $d_i(m)>0$ for $m\ge 1$ and $1\le i \le m$. So, for any $m\ge 2$ and $1\le i \le m-1$, the inequality \eqref{ieq:Briggs-BM} can be rewritten as
\begin{align}\label{ieq:Briggs-BM-re}
 \big(u_{i+1}(m)-1\big)u_i(m)-1+\frac{1}{u_i(m)}>0,
\end{align}
where
\begin{align}\label{defi:uim}
 u_i(m)=\frac{d_{i-1}(m)d_{i+1}(m)}{d_i^2(m)}.
\end{align}

We aim to prove \eqref{ieq:Briggs-BM-re} for $m\ge 2$ and $1\le i \le m-1$. As will be seen, the bounds given by Theorems \ref{thm:Chen-Gu-1} and \ref{thm:sharper-bd} are crucial to the proof.

For any $1\le i\le m-1$ and $m\ge 2$, it is routine to verify that
$$(m-i+1)(i+1)(m+i^2)>(m-i)i(m+i^2+1)$$
and hence by \eqref{eq:lb-mell1} we have
\begin{align}\label{u_i<1}
 0< u_i(m) <1.
\end{align}
It should be noted that \eqref{u_i<1} is also implied by
a result due to Chen and Xia \cite[Theorem 1.1]{Chen-Xia}.

By Theorems \ref{thm:Chen-Gu-1} and \ref{thm:sharper-bd}, we have for $m\ge 2$ and $1\le i \le m-1$,
\begin{align}\label{bds-uim}
 f_i(m)<u_i(m)<g_i(m),
\end{align}
where
\begin{align}\label{defi:figim}
f_i(m)=\frac{(m-i)i}{(m-i+1)(i+1)},\quad
g_i(m)=\frac{(m-i)i(m+i^2+1)}{(m-i+1)(i+1)(m+i^2)}.
\end{align}

It follows from \eqref{u_i<1} and \eqref{bds-uim} that for $m\ge 2$ and $1\le i \le m-2$,
\begin{align}\label{ieq:Briggs>}
 (u_{i+1}(m)-1)u_i(m)-1+\frac{1}{u_i(m)}
>(f_{i+1}(m)-1)g_i(m)-1+\frac{1}{g_i(m)}.
\end{align}
Denote by $E_i(m)$ the right-hand side of \eqref{ieq:Briggs>}. Observe that
\begin{align}\label{ieq:B-r}
 E_i(m)
=\frac{A}{i(i+1)(i+2)(m-i)(m-i+1)(m+i^2)(m+i^2+1)},
\end{align}
where
\begin{align*}
 A
=&\ i^5(2m^2-i^2+10m-4i)+5i^3m^2(m-i)+i^3m(13m-3i)+3im^3(m-i)\\
 &\ +4i^5+4i^4+4i^3m+4i^2m^2+2im^3+2m^4+i^3+5i^2m+m^2(4m-i)+2m^2.
\end{align*}
For $m\ge 2$ and $1\le i \le m-2$, it is clear that $A>0$, and hence $E_i(m)>0$. Then we obtain \eqref{ieq:Briggs-BM-re} by \eqref{ieq:Briggs>} for $m\ge 2$ and $1\le i \le m-2$.

It remains to prove \eqref{ieq:Briggs-BM-re} for $m\ge 2$ and $i=m-1$. In this case, note that
\begin{align*}
 u_{i+1}(m)=u_m(m)
=\frac{d_{m-1}(m)d_{m+1}(m)}{d_m^2(m)}
=f_{i+1}(m)=f_m(m)=0.
\end{align*}
Clearly, the inequality in \eqref{ieq:Briggs>} still holds for $m\ge 2$ and $i=m-1$. It is easily checked that
$E_i(m)>0$ for $m\ge 2$ and $i=m-1$. Again, we arrive at \eqref{ieq:Briggs-BM-re} by \eqref{ieq:Briggs>}.
This completes the proof.
\end{proof}

We proceed to prove Theorem \ref{Thm:Briggs-ieq-k}.

\begin{proof}[Proof of Theorem \ref{Thm:Briggs-ieq-k}]
Fix $k\ge 0$ and $m\ge 2$ throughout this proof. Similar to the proof of Theorem \ref{Thm:Briggs-ieq}, for any $1\le i \le m-1$, we reformulate the inequality \eqref{ieq:Briggs-BM-k} as
\begin{align}\label{ieq:Briggs-BM-re-k}
 \big(u_{i+1,k}(m)-1\big)u_{i,k}(m)-1+\frac{1}{u_{i,k}(m)}>0,
\end{align}
where
\begin{align}\label{defi:uim-k}
u_{i,k}(m)=\frac{i+k}{i+k+1}u_i(m).
\end{align}

We aim to prove \eqref{ieq:Briggs-BM-re-k} for $1\le i \le m-1$.
By \eqref{u_i<1} and \eqref{defi:uim-k}, we see that
\begin{align}\label{u_i_k<1}
 0< u_{i,k}(m) <1.
\end{align}
Moreover, from the relations \eqref{bds-uim} and \eqref{defi:uim-k} we have that for $1\le i \le m-1$,
\begin{align}\label{bds-uim-k}
  f_{i,k}(m)<u_{i,k}(m)<g_{i,k}(m),
\end{align}
where
\begin{align}
f_{i,k}(m)=\frac{i+k}{i+k+1}f_{i}(m),\quad
g_{i,k}(m)=\frac{i+k}{i+k+1}g_i(m).
\end{align}
Combining \eqref{u_i_k<1} and \eqref{bds-uim-k}, we deduce that for $1\le i \le m-2$,
\begin{align}\label{ieq:Briggs>-k}
 \big(u_{i+1,k}(m)-1\big)u_{i,k}(m)-1+\frac{1}{u_{i,k}(m)}
>\big(f_{i+1,k}(m)-1\big)g_{i,k}(m)-1+\frac{1}{g_{i,k}(m)}.
\end{align}
Let $E_{i,k}(m)$ denote the right-hand side of \eqref{ieq:Briggs>-k}. Observe that
{\small
\begin{align*}
 E_{i,k}(m)
=\frac{B_0+B_1\cdot k+B_2\cdot k^2+A\cdot k^3}{i(i+1)(i+2)(m-i)(m-i+1)(m+i^2)(m+i^2+1)(i + k) (i + k+1) (i + k+2) },
\end{align*}
}%
where $A$ remains the same as \eqref{ieq:B-r}, and $B_0$, $B_1$, $B_2$ are defined as follows:
{\small
\begin{align*}
 B_0=
 &\ 9 i^8 m^2+19 i^6 m^3-14 i^6 m^2+42 i^5 m^3+40 i^5 m^2+10 i^4 m^4+29 i^4 m^3+69 i^4 m^2+32 i^3 m^4\\
 &\ +30 i^3 m^3+46 i^3 m^2+37 i^2 m^4+42 i^2 m^3+17 i^2 m^2-10 i^9 m-4 i^8 m+27 i^7 m+15 i^6 m\\
 &\ +7 i^5 m+21 i^4 m+28 i^3 m+8 i^2 m+2 i^{10}-5 i^9-22 i^8-12 i^7+12 i^6+17 i^5+4 i^4\\
 &\ +20 i m^4 +32 i m^3+12 i m^2+4 m^4+8 m^3+4 m^2\\
 B_1=&\ i^7 (17 m^2 - 14 im + 24  m - 24 i) + i^4 (51 m^3 - 38 i^3) +  i^5 m^2 (37 m - 11 i) + 59 i^6 m + 20 i^3 m^4 \\
 &\ + 3 i^5 m^2 + 8 i^6 +  8 i^5 m + 110 i^4 m^2 + 16 i^3 m^3 + 34 i^5 + 25 i^4 m +
 87 i^3 m^2 + 42 i^2 m^3 + 37 i m^4 \\
     &\ + 16 i^4 + 50 i^3 m +
 28 i^2 m^2 + 54 i m^3 + 48 i^2 m^4 + 10 m^4 + 24 i^2 m + 17 i m^2 +
 20 m^3 + 10 m^2\\
 B_2=&\ i^6 (10  m^2 - 4 i m - 3 i^2 + 34  m - 21 i) + i^5 (21  m - 8 i) +  i^4 m^2 (23 m - 14 i) + 30 i^4 m^2 \\
 &\ + 10 i^3 m^3  + 13 i^2 m^4 +  20 i^5 + i^4 m + 62 i^3 m^2 + i^2 m^3 + 20 i m^4+ 15 i^4 +  27 i^3 m + 17 i^2 m^2   \\
 &\ + 24 i m^3 + 8 m^4 + i^3 + 21 i^2 m +  4 i m^2 + 16 m^3 + 8 m^2.
\end{align*}
}%

For any $1\le i \le m-2$, it is clear that $A,\ B_1, \ B_2>0$.
To show $B_0>0$, notice that
\begin{align*}
  B_0 = (m-i)C + D,
\end{align*}
where
\begin{align*}
  C  = &\ i^8 (9 m+15-i) +i^7 (19 m+65) + i^6 \left(19 m^2+38 m+116\right)+i^5 \left(52 m^2+101 m+143\right)\\
  &\ +i^4 \left(10 m^3+61 m^2+136 m+129\right) +i^3 \left(32 m^3+67 m^2+108 m+81\right) \\
  &\ +i^2 \left(37 m^3+62 m^2+53 m+28\right) +4 i \left(5 m^3+9 m^2+5 m+1\right)+4 m (m+1)^2,\\
  D  =&\ 4 i^2 + 28 i^3 + 85 i^4 + 146 i^5 + 155 i^6 + 104 i^7 + 43 i^8 +
  10 i^9 + i^{10}.
\end{align*}
Hence, $E_{i,k}(m)>0$. Then \eqref{ieq:Briggs-BM-re-k} follows from \eqref{ieq:Briggs>-k} for $k\geq 0$, $m\ge 2$ and $1\le i \le m-2$.

It remains to verify \eqref{ieq:Briggs-BM-re-k} for $k\ge 0$, $m\ge 2$ and $i=m-1$. In this case, note that
\begin{align*}
 u_{i+1,k}(m)=u_{m,k}(m)
=\frac{m+k}{m+k+1} u_m(m)
=f_{i+1,k}(m)=f_{m,k}(m)
=\frac{m+k}{m+k+1}f_{m}(m)=0.
\end{align*}
Clearly, the inequality \eqref{ieq:Briggs>-k} still holds for $k\geq 0$, $m\ge 2$ and $i=m-1$. It is easy to verify that $E_{m-1,k}(m)>0$ for $k\geq 0$ and $m\ge 2$. Again, we arrive at \eqref{ieq:Briggs-BM-re-k} by \eqref{ieq:Briggs>-k}.
This completes the proof.
\end{proof}

\section{Strict ratio-log-convexity of $\{d_i(m)\}_{m\ge i}$}
\label{Sec:3}

The aim of this section is to prove the strict ratio-log-convexity of transposed Boros-Moll sequences $\{d_i(m)\}_{m\ge i}$ for $i\ge 1$. As will be seen in Section \ref{Sec:4}, this property plays a key role in our proof of Theorem \ref{Thm:Briggs-ieq-tr}.

Chen, Guo and Wang \cite{CGW2014} initiated the study of ratio log-behavior of combinatorial sequences.
Recall that a real sequence $\{a_n\}_{n\ge 0}$ is called ratio log-concave (resp. ratio log-convex) if the sequence $\{a_n/a_{n-1}\}_{n\ge 1}$ is log-concave (resp. log-convex). They also showed that the ratio log-concavity (resp. ratio log-convexity) of a sequence $\{a_n\}_{n\ge N}$ implies the strict log-concavity (resp. strict log-convexity) of the sequence $\{\sqrt[n]{a_n}\}_{n\ge N}$ under certain initial condition  \cite{CGW2014}. By applying these criteria, they confirmed a conjecture of Sun \cite{Sun2013} on the log-concavity of the sequence $\{\sqrt[n]{D_n}\}_{n\ge 1}$ for the Domb numbers $D_n$.

Partial progress has been made on the ratio log-behaviour of the sequence $\{d_i(m)\}_{m\ge i}$. Zhao \cite[Theorem 6.1]{Zhao2023-2} proved that the sequence $\{d_i(m)\}_{m\ge i}$ is strictly ratio log-concave for $i=0$, while it is strictly ratio log-convex for each $1\le i\le 135$. For $i\ge 136$, Zhao \cite[Theorem 6.2]{Zhao2023-2} proved an asymptotic result for this property for $m\ge \lfloor(\sqrt{2}/4)i^{3/2}-15i/32\rfloor+2$.
With a sharper bound for $d_i(m)/d_i(m-1)$, we obtain the following exact result for this property.

\begin{theorem}\label{thm:st-r-lvx}
The transposed Boros-Moll sequences $\{d_i(m)\}_{m\ge i}$ are strictly ratio-log-convex for any $i\ge 1$. That is, for each $i\ge 1$ and $m\ge i+2$,
\begin{align}\label{ieq:r-lg-cvx-BM}
 \left(\frac{d_i(m)}{d_i(m-1)}\right)^2
<\left(\frac{d_i(m-1)}{d_i(m-2)}\right)\left(\frac{d_i(m+1)}{d_i(m)}\right).
\end{align}
\end{theorem}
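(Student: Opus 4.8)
The plan is to recast strict ratio-log-convexity as the monotonicity of a single auxiliary sequence, and then to pin that sequence down with sharp two-sided bounds on the consecutive ratio $d_i(m)/d_i(m-1)$.

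First I would write $R_i(m)=d_i(m)/d_i(m-1)$ for $m\ge i+1$, which is positive since $d_i(m)>0$. With this notation the desired inequality \eqref{ieq:r-lg-cvx-BM} is precisely the log-convexity $R_i(m)^2<R_i(m-1)R_i(m+1)$ of the sequence $\{R_i(m)\}$. Dividing through, this is equivalent to $R_i(m)/R_i(m-1)<R_i(m+1)/R_i(m)$, i.e.\ to the strict monotonicity in $m$ of
$$\Lambda_i(m):=\frac{R_i(m+1)}{R_i(m)}=\frac{d_i(m-1)\,d_i(m+1)}{d_i(m)^2}.$$
So the goal reduces to proving $\Lambda_i(m-1)<\Lambda_i(m)$ for all $i\ge 1$ and $m\ge i+2$.

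The key analytic input will be a pair of explicit rational bounds $\phi_i(m)<R_i(m)<\psi_i(m)$, sharp in $m$. I would obtain these from a recurrence for $d_i(m)$ in the variable $m$ that relates $d_i(m)$ to $d_i(m-1)$ and the neighbouring value $d_{i-1}(m-1)$; substituting a known sharp estimate for the $i$-direction ratio (of the Chen--Gu/Zhao type used in Section \ref{Sec:2}) then closes the recurrence into a self-contained bound for $R_i(m)$. It is exactly the sharpening of Zhao's earlier asymptotic bound \cite{Zhao2023-2} that upgrades the asymptotic statement to the exact range $m\ge i+2$. With such bounds in hand the two factors of $\Lambda_i$ can be chained,
$$\Lambda_i(m-1)=\frac{R_i(m)}{R_i(m-1)}<\frac{\psi_i(m)}{\phi_i(m-1)},\qquad \Lambda_i(m)=\frac{R_i(m+1)}{R_i(m)}>\frac{\phi_i(m+1)}{\psi_i(m)},$$
so that it suffices to establish the single explicit inequality $\psi_i(m)^2\le\phi_i(m-1)\phi_i(m+1)$. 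Clearing denominators turns this into the positivity, for $i\ge 1$ and $m\ge i+2$, of one polynomial in $i$ and $m$, which I expect to verify by exhibiting it as a sum of manifestly nonnegative monomials, exactly as the quantities $A$ and $B_0$ were handled in the proofs of Theorems \ref{Thm:Briggs-ieq} and \ref{Thm:Briggs-ieq-k}.

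The main obstacle is the required sharpness of the bounds. Since $R_i(m)$ tends to a finite limit as $m\to\infty$ with corrections in powers of $1/m$, the two quantities $\psi_i(m)^2$ and $\phi_i(m-1)\phi_i(m+1)$ agree to leading and sub-leading order, and the inequality only survives at the next order; consequently the gap $\psi_i-\phi_i$ must be controlled more tightly than the ``convexity margin'', so crude bounds are useless and a genuinely second-order-accurate estimate for $d_i(m)/d_i(m-1)$ is needed. Producing a bound of exactly the right precision, and then checking the resulting high-degree polynomial positivity uniformly in both parameters, is where the real work lies; the first cases $m=i+1,i+2$, where $d_i(m-2)$ or some ratio degenerates, would be disposed of separately.
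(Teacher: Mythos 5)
Your opening reduction is correct: \eqref{ieq:r-lg-cvx-BM} is exactly the log-convexity of $R_i(m)=d_i(m)/d_i(m-1)$, i.e.\ the strict monotonicity of $\Lambda_i(m)=d_i(m-1)d_i(m+1)/d_i(m)^2$. But the heart of your plan --- closed-form bounds $\phi_i(m)<R_i(m)<\psi_i(m)$ satisfying $\psi_i(m)^2\le\phi_i(m-1)\phi_i(m+1)$ --- is precisely the part you leave unconstructed, and it faces a structural obstruction, not just bookkeeping. Writing $\phi_i=R_i(1-\epsilon_1)$ and $\psi_i=R_i(1+\epsilon_2)$, your target inequality forces $\epsilon_1+\epsilon_2$ to be smaller than the convexity margin $\log\Lambda_i(m)-\log\Lambda_i(m-1)$, which (since $R_i(m)=2+O(1/m)$ for fixed $i$) is of order $m^{-3}$; so you would need a matched pair of two-sided bounds with relative accuracy $O(m^{-3})$, uniformly in $i$ and valid all the way down to $m=i+2$, where no asymptotic smallness helps. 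No such pair is known: the literature supplies only the one-sided lower bound $L(m,i)$ of Theorem \ref{thm:lb-ratdlm1m}, and the paper remarks in Section \ref{Sec:4} that the known bounds on the very quantity $v_i(m)=\Lambda_i(m)$ are not sharp enough for this kind of direct chaining. Your suggestion to manufacture $\phi_i,\psi_i$ from the mixed recurrence \eqref{eq:reclm1} together with Chen--Gu/Zhao-type estimates also does not close as stated, because those estimates control $d_{i-1}(m)d_{i+1}(m)/d_i^2(m)$ rather than the single ratio $d_{i-1}(m)/d_i(m)$ that \eqref{eq:reclm1} would require.

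The paper's actual proof circumvents the need for an upper bound altogether: it applies the Sun--Zhao criterion (Theorem \ref{thm-crlx}) to the three-term Kauers--Paule recurrence \eqref{eq:rec-dim-m}, taking $g(n)=L(n-1,i)$. One then needs only the lower bound $g(n)\le d_i(n)/d_i(n-1)$, the elementary inequality $g(n)\ge\mathfrak{a}(n)/2$, and two polynomial inequalities in $g,\mathfrak{a},\mathfrak{b}$ (conditions (ii) and (iii)), all of which reduce to explicit positivity of polynomials in $i$ and $n$ --- the same kind of verification you anticipated for your cleared-denominator inequality, but now feasible because the recurrence itself supplies the second-order information that your plan tries to extract from a hypothetical sharp upper bound. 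If you want to salvage your route, the natural fix is to replace the missing $\psi_i$ by the recurrence relation; carried out carefully, that is essentially a rederivation of Theorem \ref{thm-crlx}.
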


We shall prove Theorem \ref{thm:st-r-lvx} by applying the following criterion for the ratio log-convexity of a sequence obtained by Sun and Zhao \cite{SunZhao}, which was deduced along with the spirit of Chen, Guo, and Wang \cite[Sectioin 4]{CGW2014}.

\begin{theorem}$($\cite[Theorem 4.2]{SunZhao}$)$\label{thm-crlx}
Let $\{S_n\}_{n\ge 0}$ be a positive sequence satisfying the following recurrence relation,
\begin{align*}
S_n=\mathfrak{a}(n)S_{n-1}+\mathfrak{b}(n)S_{n-2}, \quad n\ge 2,
\end{align*}
with real $\mathfrak{a}(n)$ and $\mathfrak{b}(n)$.
Suppose $\mathfrak{a}(n)>0$ and $\mathfrak{b}(n)<0$ for $n\ge N$ where $N$ is a nonnegative integer. If there exists a function $g(n)$ such that for all $n\ge N+2$,
\begin{itemize}
\item[$(i)$] $\frac{\mathfrak{a}(n)}{2}\le g(n)\le \frac{S_n}{S_{n-1}};$
\item[$(ii)$]  $4g^3(n)-3\mathfrak{a}(n)g^2(n)-\mathfrak{a}(n+1)\mathfrak{b}(n)\ge 0;$
\item[$(iii)$]$g^4(n)-\mathfrak{a}(n)g^3(n)-\mathfrak{a}(n+1)\mathfrak{b}(n)g(n)
  -\mathfrak{b}(n)\mathfrak{b}(n+1)\ge 0,$
\end{itemize}
then $\{S_n\}_{n\ge N}$ is ratio log-convex, that is, for $n\ge N+2$,
\begin{align}\label{ieq:ra-lcv-1}
 (S_n/S_{n-1})^2\le (S_{n-1}/S_{n-2})(S_{n+1}/S_{n}).
\end{align}
\end{theorem}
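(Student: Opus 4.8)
The plan is to work directly with the ratio sequence $r_n=S_n/S_{n-1}$, which is positive since $S_n>0$, and to reduce the target inequality $r_n^2\le r_{n-1}r_{n+1}$ to a single quartic inequality in $r_n$ that the hypotheses are tailored to control. Dividing the recurrence $S_n=\mathfrak{a}(n)S_{n-1}+\mathfrak{b}(n)S_{n-2}$ by $S_{n-1}$ gives
\begin{align*}
r_n=\mathfrak{a}(n)+\frac{\mathfrak{b}(n)}{r_{n-1}},
\end{align*}
so that (using $\mathfrak{b}(n)<0$ and $r_{n-1}>0$ for $n\ge N+2$) one has $r_n<\mathfrak{a}(n)$, and moreover
\begin{align*}
r_{n-1}=\frac{\mathfrak{b}(n)}{r_n-\mathfrak{a}(n)},\qquad
r_{n+1}=\mathfrak{a}(n+1)+\frac{\mathfrak{b}(n+1)}{r_n}.
\end{align*}

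First I would substitute these two expressions into $r_{n-1}r_{n+1}-r_n^2$ and clear denominators. Writing $x:=r_n$, one has $x>0$ and $x-\mathfrak{a}(n)<0$, so the common denominator $x\big(x-\mathfrak{a}(n)\big)$ is negative; multiplying through and reversing the inequality shows that $r_n^2\le r_{n-1}r_{n+1}$ is equivalent to $F(r_n)\ge 0$, where
\begin{align*}
F(x)=x^4-\mathfrak{a}(n)x^3-\mathfrak{a}(n+1)\mathfrak{b}(n)\,x-\mathfrak{b}(n)\mathfrak{b}(n+1).
\end{align*}
The point of this reduction is that the three hypotheses decode directly into statements about $F$: condition $(iii)$ is precisely $F(g(n))\ge 0$, while condition $(ii)$ is precisely $F'(g(n))\ge 0$, where $F'(x)=4x^3-3\mathfrak{a}(n)x^2-\mathfrak{a}(n+1)\mathfrak{b}(n)$.

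It then remains to promote $F(g(n))\ge 0$ to $F(r_n)\ge 0$ using $g(n)\le r_n$ from condition $(i)$. For this I would show $F$ is nondecreasing on $[g(n),r_n]$. Computing $F''(x)=6x\big(2x-\mathfrak{a}(n)\big)$, one sees $F''(x)\ge 0$ for $x\ge\mathfrak{a}(n)/2$, so $F'$ is nondecreasing on $[\mathfrak{a}(n)/2,\infty)$. Condition $(i)$ also supplies $g(n)\ge\mathfrak{a}(n)/2$, hence every $x\in[g(n),r_n]$ satisfies $x\ge g(n)\ge\mathfrak{a}(n)/2$, so $F'(x)\ge F'(g(n))\ge 0$ by condition $(ii)$. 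Thus $F$ is nondecreasing on $[g(n),r_n]$, and $F(r_n)\ge F(g(n))\ge 0$ by condition $(iii)$, which is the desired inequality $r_n^2\le r_{n-1}r_{n+1}$ for all $n\ge N+2$.

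The only real obstacle is the bookkeeping in the reduction step: because $\mathfrak{b}(n)<0$ forces $r_n<\mathfrak{a}(n)$, the denominator $r_n\big(r_n-\mathfrak{a}(n)\big)$ is negative, and one must track the resulting sign reversal carefully to land on the correctly oriented quartic $F(x)\ge 0$ rather than its negation. Once this reduction is set up correctly, the remainder is a transparent one-variable convexity argument, with conditions $(i)$–$(iii)$ furnishing, respectively, the placement $g(n)\in[\mathfrak{a}(n)/2,\,r_n]$, the sign $F'(g(n))\ge 0$, and the sign $F(g(n))\ge 0$; together these give monotonicity of $F$ on $[g(n),r_n]$ and hence $F(r_n)\ge 0$.
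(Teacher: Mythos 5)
Your proof is correct and complete: the reduction of $r_n^2\le r_{n-1}r_{n+1}$ to $F(r_n)\ge 0$ via the sign reversal coming from the negative denominator $r_n\big(r_n-\mathfrak{a}(n)\big)$ is exact (the recurrence supplies $r_{n-1}=\mathfrak{b}(n)/(r_n-\mathfrak{a}(n))$ and $r_{n+1}=\mathfrak{a}(n+1)+\mathfrak{b}(n+1)/r_n$ as identities, and $r_n-\mathfrak{a}(n)=\mathfrak{b}(n)/r_{n-1}<0$ guarantees the denominator is nonzero and negative), while conditions $(i)$--$(iii)$ furnish exactly $g(n)\in[\mathfrak{a}(n)/2,\,r_n]$, $F'(g(n))\ge 0$ and $F(g(n))\ge 0$, so that $F''(x)=6x\big(2x-\mathfrak{a}(n)\big)\ge 0$ on $[\mathfrak{a}(n)/2,\infty)$ gives $F(r_n)\ge F(g(n))\ge 0$. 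The paper quotes this theorem from \cite{SunZhao} without reproducing its proof, and your argument is essentially the one given there (in the spirit of \cite{CGW2014}); note it also immediately yields Remark \ref{rmk}, since strictness in condition $(iii)$ gives $F(r_n)\ge F(g(n))>0$.
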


\begin{remark}\label{rmk}
By the proof of Theorem \ref{thm-crlx}, it is easy to see that the inequality in \eqref{ieq:ra-lcv-1} holds strictly if the inequality in condition (iii) is strict.
\end{remark}

To apply Theorem \ref{thm-crlx}, we shall employ the following recursion found by Kauers and Paule \cite{Kauers-Paule} with a computer algebraic system, and independently obtained by Moll \cite{Moll2007} via the WZ-method \cite{WZ}.

\begin{theorem}$($\cite[Eq. (8)]{Kauers-Paule}$)$\label{Thm:rec-dim-im}
For $m\ge 2$ and $1\le i \le m-1$, there holds
\begin{align}
  d_i(m)
=\frac{8m^2-8m-4i^2+3}{2m(m-i)}d_i(m-1)
      -\frac{(4m-5)(4m-3)(m-1+i)}{4m(m-1)(m-i)}d_i(m-2).\label{eq:rec-dim-m}
\end{align}
\end{theorem}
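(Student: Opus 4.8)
The plan is to prove the recurrence by \emph{creative telescoping} (Zeilberger's algorithm) applied directly to the explicit double sum \eqref{eq:B-M-seq}, which is the mechanism underlying both the Kauers--Paule and the Moll (WZ) derivations. Fix $i\ge 1$ and regard $m$ as the recurrence variable. Writing
$$F(m,k)=2^{-2m}2^{k}\binom{2m-2k}{m-k}\binom{m+k}{k}\binom{k}{i},$$
we have $d_i(m)=\sum_{k}F(m,k)$, a finite sum supported on $i\le k\le m$ (the factor $\binom{k}{i}$ kills $k<i$, and $\binom{2m-2k}{m-k}$ kills $k>m$). The goal is to produce polynomials $c_0(m),c_1(m),c_2(m)$, independent of $k$, together with a rational \emph{certificate} $R(m,k)$ such that, with $G(m,k)=R(m,k)F(m,k)$,
$$c_0(m)F(m-2,k)+c_1(m)F(m-1,k)+c_2(m)F(m,k)=G(m,k+1)-G(m,k).$$
Summing over $k$ makes the right-hand side telescope to $0$, yielding $c_0(m)d_i(m-2)+c_1(m)d_i(m-1)+c_2(m)d_i(m)=0$; dividing by $c_2(m)$ and simplifying must then collapse the coefficients to those in \eqref{eq:rec-dim-m}.

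First I would reduce the certificate identity to an elementary rational-function identity in $k$. Since $F$ is hypergeometric in both arguments, all relevant shift quotients are explicit rational functions; a direct computation gives
$$\frac{F(m,k+1)}{F(m,k)}=\frac{(m-k)(m+k+1)}{(2m-2k-1)(k+1-i)},\qquad \frac{F(m-1,k)}{F(m,k)}=\frac{2m(m-k)}{(2m-2k-1)(m+k)},$$
and $F(m-2,k)/F(m,k)$ follows by composing the second quotient with its $m\mapsto m-1$ shift. Dividing the telescoping identity through by $F(m,k)$ therefore turns it into an identity between rational functions of $k$ with $m,i$ as parameters; clearing the common denominator reduces the verification to a single polynomial identity in $k$, checkable directly once the certificate is in hand.

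The substantive algebraic step is producing $R(m,k)$ together with the $c_j(m)$. Because $F(m,k)$ is a proper hypergeometric term in $(m,k)$ with $i$ entering only as a parameter, Zeilberger's algorithm guarantees that a telescoping relation of the above shape exists and returns the $c_j(m)$ and $R(m,k)$; the order being exactly two is consistent with the three-term form of \eqref{eq:rec-dim-m}. I would run the algorithm (equivalently, solve the Gosper equation for the stated combination) to obtain $R$ explicitly as a rational function of $k$ whose only possible poles sit at the zeros of $(2m-2k-1)(k-i)(m+k)$, and then confirm that the normalized coefficients $-c_1/c_2$ and $-c_0/c_2$ coincide with $\tfrac{8m^2-8m-4i^2+3}{2m(m-i)}$ and $-\tfrac{(4m-5)(4m-3)(m-1+i)}{4m(m-1)(m-i)}$.

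The main obstacle is the boundary bookkeeping rather than the polynomial algebra. The certificate carries denominator factors such as $(k+1-i)^{-1}$ and $(2m-2k-1)^{-1}$, so $G(m,k)$ has apparent poles exactly at the edges of the effective summation range; to conclude $\sum_k\big(G(m,k+1)-G(m,k)\big)=0$ I must verify that $G(m,k)$ genuinely vanishes—after accounting for the zeros of $F$ coming from $\binom{k}{i}$ and $\binom{2m-2k}{m-k}$—at the bottom near $k=i$ and just past the top $k=m$. This is where the hypothesis $1\le i\le m-1$ enters, and it is the step I would treat most carefully. As an independent, more conceptual (if messier) check, I would note that the integral representation $N_m(x)=\int_0^\infty(t^4+2xt^2+1)^{-(m+1)}\,dt=\tfrac{\pi}{2^{m+3/2}}(x+1)^{-m-1/2}P_m(x)$ yields, via differentiation under the integral sign and integration by parts, the first-order contiguous relation $P_{m+1}(x)=\big(x+\tfrac{4m+3}{2(m+1)}\big)P_m(x)+\tfrac{x(x+1)}{m+1}P_m'(x)$; extracting the coefficient of $x^i$ gives the mixed recurrence $d_i(m+1)=\tfrac{m+i}{m+1}d_{i-1}(m)+\tfrac{4m+2i+3}{2(m+1)}d_i(m)$, and combining this with a companion contiguous relation and eliminating the off-diagonal terms $d_{i\pm1}$ reproduces \eqref{eq:rec-dim-m}. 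Since that elimination is precisely what the computer-algebra derivation automates, I would carry out the proof by creative telescoping and keep the differential-recurrence computation as a consistency check.
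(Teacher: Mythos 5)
Your creative-telescoping plan is exactly the mechanism behind the paper's source for this statement: the paper does not reprove \eqref{eq:rec-dim-m} but imports it, citing Kauers--Paule's computer proof (Zeilberger's algorithm) and Moll's independent WZ derivation, and your details check out --- the shift quotients $F(m,k+1)/F(m,k)=\frac{(m-k)(m+k+1)}{(2m-2k-1)(k+1-i)}$ and $F(m-1,k)/F(m,k)=\frac{2m(m-k)}{(2m-2k-1)(m+k)}$ are both correct, and extracting the coefficient of $x^i$ from your contiguous relation indeed reproduces the mixed recurrence \eqref{eq:reclm1} that the paper quotes from the same source. The only piece you leave implicit, the explicit certificate $R(m,k)$ together with the boundary verification near $k=i$ and $k=m$, is precisely the routine, algorithmically guaranteed portion that the cited computer-algebra proofs carry out, so your proposal is correct and takes essentially the same route as the paper's (cited) proof.
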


The following lower bound for the ratio $d_i(m+1)/d_i(m)$ derived by Zhao \cite{Zhao2023} is crucial to our proof of Theorem \ref{thm:st-r-lvx}.

\begin{theorem}$($\cite[Theorem 2.1]{Zhao2023}$)$\label{thm:lb-ratdlm1m}
For any $m\ge 2$ and $1\le i \le m-1$,we have
\begin{align}\label{ineq:lb-dlm1m}
\frac{d_i(m+1)}{d_i(m)}>L(m,i),
\end{align}
where
\begin{align}\label{eq:def-Lml}
L(m,i)=\frac{4m^2+7m-2i^2+3}{2(m+1)(m-i+1)}
  +\frac{i\sqrt{4i^4+8i^2m+5i^2+m}}{2(m+1)(m-i+1)\sqrt{m+i^2}}.
\end{align}
\end{theorem}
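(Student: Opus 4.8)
The plan is to turn \eqref{ineq:lb-dlm1m} into a one-step induction driven by the recurrence of Theorem~\ref{Thm:rec-dim-im}. Set $r_i(m)=d_i(m+1)/d_i(m)$. Replacing $m$ by $m+1$ in \eqref{eq:rec-dim-m} gives
\begin{align*}
 d_i(m+1)=\alpha_i(m)\,d_i(m)-\beta_i(m)\,d_i(m-1),
\end{align*}
where
\begin{align*}
 \alpha_i(m)=\frac{8m^2+8m-4i^2+3}{2(m+1)(m-i+1)},\qquad
 \beta_i(m)=\frac{(4m-1)(4m+1)(m+i)}{4m(m+1)(m-i+1)},
\end{align*}
both positive for $1\le i\le m-1$. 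Dividing by $d_i(m)$ yields the Riccati-type identity
\begin{align*}
 r_i(m)=\alpha_i(m)-\frac{\beta_i(m)}{r_i(m-1)},
\end{align*}
whose right-hand side, since $\beta_i(m)>0$, is a strictly increasing function of $r_i(m-1)$ on $(0,\infty)$. Hence any lower bound for $r_i(m-1)$ propagates to a lower bound for $r_i(m)$; this monotonicity is the engine of the whole argument.

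I would then induct on $m$ with the hypothesis $r_i(m)\ge L(m,i)$, starting from $m=i$. For the base case, evaluating \eqref{eq:B-M-seq} directly gives $d_i(i)=2^{-i}\binom{2i}{i}$ and $d_i(i+1)=2^{-i-2}(2i+3)\binom{2i+2}{i+1}$, whence $r_i(i)=(2i+1)(2i+3)/(2(i+1))$; using the factorizations $4i^4+8i^3+5i^2+i=i(i+1)(2i+1)^2$ and $i+i^2=i(i+1)$, one checks that $L(i,i)$ collapses to exactly this value, so the base case holds with equality. For the inductive step, given $r_i(m-1)\ge L(m-1,i)>0$, the monotonicity above reduces the claim to the \emph{subsolution inequality}
\begin{align*}
 \alpha_i(m)-\frac{\beta_i(m)}{L(m-1,i)}\ge L(m,i),\qquad m\ge i+1.
\end{align*}
Strictness of \eqref{ineq:lb-dlm1m} for $m\ge i+1$ then follows, since the propagated bound is already strict once $m-1\ge i+1$ and the base step produces a strict gap at $m=i+1$.

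The hard part will be the subsolution inequality, because both $L(m,i)$ and $L(m-1,i)$ carry square roots and the latter sits in a denominator. Multiplying by $L(m-1,i)>0$, I would recast it as $(\alpha_i(m)-L(m,i))\,L(m-1,i)\ge\beta_i(m)$, and first secure the sign condition $\alpha_i(m)-L(m,i)>0$, equivalently
\begin{align*}
 (4m^2+m-2i^2)\sqrt{m+i^2}>i\sqrt{4i^4+8i^2m+5i^2+m},
\end{align*}
which after squaring is a polynomial inequality that is transparently true for $1\le i\le m-1$. With both sides of the subsolution inequality then positive, I would clear denominators and eliminate one radical at a time by squaring, reducing to a polynomial inequality in $m$ and $i$; the substitution $m=i+s$ with $s\ge 1$ should present it as positivity of a polynomial with nonnegative coefficients in $i$ and $s$. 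I expect the main difficulty to lie precisely here: the bound $L$ is essentially sharp---at $(i,m)=(1,2)$ the two sides of the subsolution inequality equal $43/15$ and $17/6$, a gap of only $1/30$---so no cruder rational surrogate for the radicals will survive, and the two distinct radicals (one from $L(m,i)$, one from $L(m-1,i)$) must be removed by a careful double squaring whose bookkeeping is the heaviest part of the argument.
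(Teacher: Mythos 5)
Your scaffolding is correct, and the strategy is essentially the one behind the cited result (note that the present paper does not prove this theorem at all --- it quotes it from \cite{Zhao2023}, where the proof is likewise an induction on $m$ through the three-term recurrence). Concretely: shifting \eqref{eq:rec-dim-m} by one does give $d_i(m+1)=\alpha_i(m)d_i(m)-\beta_i(m)d_i(m-1)$ with your $\alpha_i(m)$, $\beta_i(m)$, valid for $1\le i\le m$; the map $x\mapsto \alpha_i(m)-\beta_i(m)/x$ is increasing on $(0,\infty)$ since $\beta_i(m)>0$; your closed forms $d_i(i)=2^{-i}\binom{2i}{i}$ and $d_i(i+1)=2^{-i-2}(2i+3)\binom{2i+2}{i+1}$ are right; and $L(i,i)$ does collapse to $r_i(i)=(2i+1)(2i+3)/(2(i+1))$ via $4i^4+8i^3+5i^2+i=i(i+1)(2i+1)^2$, so the induction starts at equality, and strict subsolution inequalities for $m\ge i+1$ would indeed yield the strict bound on the theorem's full range. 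Your numerics at $(i,m)=(1,2)$ (values $43/15$ versus $17/6$) also check out.

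The gap is that everything you actually establish is the easy framing; the entire content of the theorem is the subsolution inequality $\bigl(\alpha_i(m)-L(m,i)\bigr)L(m-1,i)\ge\beta_i(m)$ for all $m\ge i+1$, and you do not prove it --- you verify one data point and assert that a double squaring will land on ``a polynomial with nonnegative coefficients in $i$ and $s$.'' That is a hope, not an argument, and two concrete obstacles stand in the way. First, after expanding, the inequality contains \emph{three} radical combinations, $\sqrt{X}$, $\sqrt{Y}$ and $\sqrt{XY}$, where $X$ comes from $L(m,i)$ and $Y$ from $L(m-1,i)$; to square twice you must group as $A_0+A_1\sqrt{Y}\ge(A_2+A_3\sqrt{Y})\sqrt{X}$ and control the signs of $A_2+A_3\sqrt{Y}$ and of the coefficient of $\sqrt{Y}$ after the first squaring, each of which is its own polynomial verification that you have not set up. Second, precisely because the bound is nearly tight (your own gap of $1/30$ at the corner), the terminal polynomial must take small values near $(i,s)=(1,1)$, so positivity of all coefficients after substituting $m=i+s$ is far from automatic; cancellations may force further case splits or an additional shift, and nothing in the proposal guarantees the procedure terminates in a manifestly positive form. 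Until that computation is actually carried out (by hand or by certified computer algebra), what you have is a correct reduction plus a plausible program, not a proof of \eqref{ineq:lb-dlm1m}.
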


We are now ready to present a proof of Theorem \ref{thm:st-r-lvx}.
\begin{proof}[Proof of Theorem \ref{thm:st-r-lvx}]
Fix $i\ge 1$ throughout this proof. To apply Theorem \ref{thm-crlx}, let $S_n=d_i(n)$ and set $N=i$. By Theorem \ref{Thm:rec-dim-im}, we have
\begin{align*}
 d_i(n)=\mathfrak{a}(n)d_i(n-1)+\mathfrak{b}(n)d_i(n-2),
\end{align*}
for $n\ge i+2$,
where
\begin{align}\label{defi:an&bn}
\mathfrak{a}(n)=\frac{8n^2-8n-4i^2+3}{2n(n-i)},\qquad
\mathfrak{b}(n)=-\frac{(4n-5)(4n-3)(n-1+i)}{4n(n-1)(n-i)}.
\end{align}
Clearly, $\mathfrak{a}(n)>0$ and $\mathfrak{b}(n)<0$ for $n\ge i+2$.

For any $n\ge i+2$, we first prove the conditions $(i)$ and $(ii)$ in  Theorem \ref{thm-crlx}.
For this purpose, let
\begin{align}\label{eq:g(n)6}
 g(n)=L(n-1,i)
=\frac{4n^2-2i^2-n}{2n(n-i)}
 +\frac{i\sqrt{4i^4+8i^2n-3i^2+n-1}}{2n(n-i)\sqrt{i^2+n-1}}.
\end{align}
By Theorem \ref{thm:lb-ratdlm1m}, we have
\begin{align*}
g(n)< \frac{d_i(n)}{d_i(n-1)},
\end{align*}
for $n\ge i+2$. On the other hand, we see that
\begin{align}\label{ieq:g(n)>a(n)/2}
 g(n)-\frac{\mathfrak{a}(n)}{2}
=\frac{(6n-3)\Delta_1+2i\Delta_2}{4n(n-i)\Delta_1}>0,
\end{align}
for $n\ge i+2$, where
\begin{align}\label{defi:Delta12}
\Delta_1=\sqrt{i^2+n-1},\quad \Delta_2=\sqrt{4i^4+8i^2n-3i^2+n-1}.
\end{align}
So the inequalities in condition $(i)$ of Theorem \ref{thm-crlx} are confirmed.

We proceed to check the condition $(ii)$. Direct computation gives that
\begin{align}\label{eq:cond(ii)}
 &\ 4g^3(n)-3\mathfrak{a}(n)g^2(n)-\mathfrak{a}(n+1)\mathfrak{b}(n)\\
=&\ \frac{\Delta_1 (F_1(n-i-1)+\hat{F}_1)+\Delta_2 (F_2(n-i-1)i+\hat{F}_2)}
 {8(i^2+n-1)^{\frac{3}{2}}n^3(n-i)^3(n+1-i)(n^2-1)},\nonumber
\end{align}
where $\Delta_1$, $\Delta_2$ are given by \eqref{defi:Delta12}, and
\begin{align*}
 F_1 &
=240n^7+(240i^2+256i-364)n^6+(160i^4+256i^3-164i^2-32i+32)n^5\\
 &\quad +(280i^4+352i^3+676i^2-392i+166)n^4+32(2n^2-i^2)i^6n^2\\
 &\quad +(64i^6+448i^5+1004i^4+104i^3-64i^2+256i-83)n^3\\
 &\quad  +(128i^7+456i^6+1168i^5+798i^4+984i^3-203i^2-18i+9)n^2\\
 &\quad +(64i^8+576i^7+1568i^6+2440i^5+1596i^4+442i^3+103i^2)n\\
 &\quad  +64i^9+672i^8+2208i^7+3996i^6+3748i^5+2251i^4+560i^3+79i^2,\\
\hat{F}_1 & =64i^{10}+704i^9+2912i^8+6152i^7+7796i^6+6074i^5+2736i^4+648i^3+70i^2,\\
 F_2 &
=144n^6+36n^5+18(6i^2+16i-13)n^4+(288i^3+202i^2+72i-18)n^3\\
 &\quad +(144n^2-40i^2)i^2n^3+(16i^6+232i^4+504i^3+400i^2-180i+90)n^2\\
 &\quad +(32i^6+208i^5+800i^4+764i^3+230i^2+36i-18)n\\
 &\quad +4i^2(8i^5+56i^4+242i^3+395i^2+281i+44),\\
\hat{F}_2 & =4i^3(8i^6+68i^5+294i^4+643i^3+670i^2+315i+54).
\end{align*}

Clearly, $\Delta_1$, $\Delta_2$, $\hat{F}_1$, $\hat{F}_2$ and the denominator of the right-hand side of \eqref{eq:cond(ii)} are positive for $n\ge i+2$. It is easy to see that $F_1>0$, $F_2>0$ for $n\ge i+2$. It follows that \eqref{eq:cond(ii)} is positive for $n\ge i+2$, which leads to the condition $(ii)$.

It remains to prove that the conditions $(iii)$ in Theorem \ref{thm-crlx} holds for $n\ge i+2$. For convenience, define a function
\begin{align}\label{defi:h(x)}
 h(x):
=x^4-\mathfrak{a}(n)x^3-\mathfrak{a}(n+1)\mathfrak{b}(n)x
  -\mathfrak{b}(n)\mathfrak{b}(n+1),
\end{align}
for $x\in\mathbb{R}$.
By computation we get
\begin{align}\label{eq:cond(iii)hgn}
 h(g(n))
=\frac{G_1+\Delta_1 \Delta_2 G_2}
 {16(n^2-1)(n+1-i)(i^2+n-1)^2 n^4 (n-i)^4},
\end{align}
where $\Delta_1$, $\Delta_2$ are given by \eqref{defi:Delta12}, and
{\small
\begin{align*}
 G_1 &
=(512i-128)n^{10}+(1024i^3-768i^2-1664i+400)n^9\\
&\quad\ +(512i^5-928i^4-2304i^3+2880i^2+1392i-328)n^8\\
&\quad\ +(-352i^6-864i^5+4424i^4-192i^3-3584i^2+776i-173)n^7\\
&\quad\ +(-64i^8-160i^7+2840i^6-1896i^5-6252i^4+4448i^3+922i^2-1765i+410)n^6\\
&\quad\ +(64i^9+688i^8-1528i^7-4340i^6+6276i^5+2279i^4-4694i^3+1440i^2+885i-224)n^5\\
&\quad\ +(64i^{10}-496i^9-1336i^8+4396i^7+1592i^6-6645i^5+1812i^4+2068i^3\\
&\quad\qquad -1075i^2-139i+46)n^4\\
&\quad\ +(-64i^{11}-80i^{10}+1320i^9+332i^8-4396i^7+1554i^6+3155i^5\\
&\quad\qquad  -1916i^4-305i^3+149i^2+3i-3)n^3\\
&\quad\ +(144i^{11}+48i^{10}-1412i^9+672i^8+2152i^7-1615i^6-579i^5+599i^4-54i^3+45i^2)n^2\\
&\quad\ +(-192i^{11}+88i^{10}+740i^9-551i^8-537i^7+471i^6-20i^5+i^4+9i^3-9i^2)n\\
&\quad\ +104i^{11}-104i^{10}-189i^9+189i^8+66i^7-66i^6+19i^5-19i^4\\
G_2 &
=240in^8+(240i^3+16i^2-604i)n^7+(32i^5+16i^4-676i^3+76i^2+396i)n^6\\
&\quad +(-32i^6-280i^5+276i^4+864i^3-392i^2+134i)n^5\\
&\quad +(-32i^7+184i^6+564i^5-932i^4-310i^3+482i^2-249i)n^4\\
&\quad +(32i^8+40i^7-492i^6-112i^5+914i^4-379i^3-191i^2+92i)n^3\\
&\quad +(-72i^8-24i^7+484i^6-276i^5-325i^4+296i^3+9i^2-9i)n^2\\
&\quad +(96i^8-44i^7-208i^6+159i^5+29i^4-32i^3)n-52i^8+52i^7+49i^6-49i^5+3i^4-3i^3.
\end{align*}
}%

Clearly, the denominator of \eqref{eq:cond(iii)hgn} is positive for $n\ge i+2$. It suffices to show that
\begin{align}\label{ieq:c1}
 G_1+\Delta_1 \Delta_2 G_2>0, \qquad \mbox{ for } n\ge i+2.
\end{align}
Observe that
\begin{align}\label{ieq:delta12}
 \Delta_1\Delta_2
=&\ \sqrt{i^2+n-1}\sqrt{(4i^2+1)(i^2+n-1)+4i^2n}\nonumber\\
=&\ 2i(i^2+n-1)\sqrt{1+\frac{1}{4i^2}+\frac{n}{i^2+n-1}}\nonumber\\
>&\ 2i(i^2+n-1),
\end{align}
for $n\ge i+2$.
Also note that
\begin{align*}
G_2=i(n-i-1)H_1+H_2,
\end{align*}
where
\begin{align*}
 H_1 &
=240n^7+(240i^2+256i-364)n^6+(256i^3-180i^2-32i+32)n^5\\
&\quad\ +(8i^4+352i^3+652i^2-392i+166)n^4+32(n^2-i^2)i^4n^3\\
&\quad\ +(192i^5+924i^4+72i^3-50i^2+256i-83)n^3\\
&\quad\ +(200i^6+624i^5+884i^4+936i^3-173i^2-18i+9)n^2\\
&\quad\ +(128i^7+800i^6+1992i^5+1544i^4+438i^3+105i^2)n\\
&\quad\ +128i^8+1024i^7+2748i^6+3328i^5+2141i^4+572i^3+73i^2,\\
 H_2 &
=2i^3(64i^7+576i^6+1860i^5+3064i^4+2759i^3+1332i^2+324i+35).
\end{align*}
Clearly, $H_1>0$, $H_2>0$, and hence $G_2>0$ for $n\ge i+2$. It follows from \eqref{ieq:delta12} that
\begin{align}\label{ieq:B1B2>}
 G_1+\Delta_1 \Delta_2 G_2
>G_1+2i(i^2+n-1) G_2.
\end{align}

It remains to show that
\begin{align}\label{ieq:rem}
 G_1+2i(i^2+n-1) G_2>0,\quad n\ge i+2.
\end{align}
To this end, we rewrite the left hand side as
\begin{align*}
 G_1+2i(i^2+n-1) G_2
=(n-i-2)K_1+K_2,
\end{align*}
where
{\small
\begin{align*}
 K_1 &
=(512i-128)n^9+(1024i^3+224i^2-768i+144)n^8+(512i^5+1056i^4+872i^2-40)n^7\\
&\quad +(704i^6+1280i^5+3496i^4+800i^3+160i^2+736i-253)n^6\\
&\quad +(512i^7+3552i^6+4832i^5+5412i^4+5272i^3+1454i^2-546i-96)n^5\\
&\quad +(576i^8+4032i^7+11012i^6+18152i^5+16295i^4+9052i^3+3036i^2-303i-416)n^4\\
&\quad +(512i^9+5120i^8+20256i^7+39796i^6+50610i^5+42818i^4+21862i^3\\
&\quad\qquad   +5376i^2-1161i-786)n^3\\
&\quad +(512i^{10}+6272i^9+30476i^8+79692i^7+130670i^6+144333i^5\\
&\quad \qquad +107116i^4+49195i^3+9538i^2-3105i-1575)n^2\\
&\quad +(512i^{11}+7296i^{10}+42912i^9+140724i^8+290172i^7+405520i^6+395929i^5\\
&\quad\qquad +263352i^4+107856i^3+16034i^2-7785i-3150)n\\
&\quad +512i^{12}+8320i^{11}+57504i^{10}+226576i^9+571579i^8+985899i^7\\
&\quad\qquad +1206960i^6+1055138i^5+634619i^4+231755i^3+24274i^2-18720i-6300,\\
 K_2 &
=512i^{13}+9344i^{12}+74144i^{11}+341584i^{10}+1024744i^9+2129044i^8+3178732i^7\\
&\quad +3469084i^6+2744908i^5+1500980i^4+487784i^3+29828i^2-43740i-12600.
\end{align*}
}%

It is straightforward to check that $K_1>0$, $K_2>0$ for $n\ge i+2$, and hence
\begin{align*}
G_1+2i(i^2+n-1) G_2>0,\quad n\ge i+2.
\end{align*}
So \eqref{ieq:rem} is proved.
Combining \eqref{eq:cond(iii)hgn}, \eqref{ieq:B1B2>} and \eqref{ieq:rem}, we have $h(g(n))>0$ for $n\ge i+2$, that is, the condition $(iii)$ in Theorem \ref{thm-crlx} holds for $n\ge i+2$. It follows from Theorem \ref{thm-crlx} and Remark \ref{rmk} that \eqref{ieq:ra-lcv-1} holds strictly for $S_n=d_i(n)$. Therefore, we obtain the strict ratio-log-convexity of the transposed Boros-Moll sequence $\{d_i(m)\}_{m\ge i}$ for each $i\ge 1$.
\end{proof}

\section{Proof of Theorem \ref{Thm:Briggs-ieq-tr}}\label{Sec:4}

In this section we proceed to prove the Briggs inequality for the transposed Boros-Moll sequence $\{d_i(m)\}_{m\ge i}$ for $i\ge 1$.
Comparing with the proof of Theorem \ref{Thm:Briggs-ieq}, it is natural to consider the equivalent form of \eqref{ieq:Briggs-trBM}, that is, for $i\ge 1$ and $m\ge i+1$,
\begin{align}\label{ieq:Briggs-trBM-r}
 (v_i(m+1)-1)v_i(m)-1+\frac{1}{v_i(m)}>0,
\end{align}
where $v_i(m)=(d_{i}(m-1)d_{i}(m+1))/d_i^2(m)$.
However, numerical experiment shows that the left-hand side of \eqref{ieq:Briggs-trBM-r} trends to zero very fast, and the known bounds for $v_i(m)$ are not sufficiently sharp for the proof of \eqref{ieq:Briggs-trBM-r}. In order to prove Theorem \ref{Thm:Briggs-ieq-tr}, we find the following sufficient conditions for the Briggs inequality.

\begin{theorem}\label{thm:2-r-Briggs}
Let $\{a_n\}_{n\ge 0}$ be a sequence with positive numbers. Let $N_0$ be a positive integer. If the following conditions are satisfied:
\begin{itemize}
\item[(i)] $\{a_n\}_{n\ge N_0}$ is strictly log-concave,
\item[(ii)] $\{a_n\}_{n\ge N_0}$ is strictly ratio-log-convex,
\end{itemize}
then the Briggs inequality  holds for $\{a_n\}_{n\ge N_0}$. That is, for $n\ge N_0+1$,
\begin{align}\label{ieq:Briggs}
a_n^2(a_n^2 - a_{n-1}a_{n+1})> a_{n-1}^2(a_{n+1}^2 - a_n a_{n+2}).
\end{align}
Moreover, if $\{a_n\}_{n\ge N_0}$ is strictly $2$-log-convex and condition $(ii)$ holds, then the Briggs inequality \eqref{ieq:Briggs} holds for $n\ge N_0+2$.
\end{theorem}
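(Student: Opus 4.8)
The plan is to eliminate the four unknowns $a_{n-1},a_n,a_{n+1},a_{n+2}$ in favour of the single positive quantity $v_n:=a_{n-1}a_{n+1}/a_n^2$ (well defined since all $a_n>0$). Dividing \eqref{ieq:Briggs} by $a_n^4>0$ and writing $a_{n+2}=v_{n+1}a_{n+1}^2/a_n$, one sees that \eqref{ieq:Briggs} at index $n$ is \emph{equivalent} to
\begin{align*}
 (v_{n+1}-1)\,v_n-1+\frac{1}{v_n}>0,
\end{align*}
which is precisely the shape already exploited for the untransposed sequence in \eqref{ieq:Briggs-BM-re}. Since every division here is by a positive quantity, this reduction is a genuine ``if and only if'', and the whole theorem becomes a statement about the two consecutive values $v_n$ and $v_{n+1}$.

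Next I would record what the hypotheses say about the sequence $\{v_n\}$. Setting $r_n=a_n/a_{n-1}$, a direct computation gives $v_n=r_{n+1}/r_n$, so that log-convexity of the ratios $\{r_n\}$, namely $r_n^2\le r_{n-1}r_{n+1}$, is exactly the statement $v_{n-1}\le v_n$. Hence condition $(ii)$ (strict ratio-log-convexity) says precisely that $\{v_n\}$ is strictly increasing, i.e.\ $v_{n+1}>v_n$. Likewise condition $(i)$ (strict log-concavity) reads $v_n<1$, whereas strict $2$-log-convexity implies in particular strict log-convexity, i.e.\ $v_n>1$. I would then track the first index at which each of these becomes available on the truncated sequence $\{a_n\}_{n\ge N_0}$: the first-order conditions reach back only to $a_{n-1}$ and so hold from $n\ge N_0+1$, while the second-order condition embedded in $2$-log-convexity reaches back to $a_{n-2}$ and so holds only from $n\ge N_0+2$; this accounts for the two different starting indices in the conclusions.

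The core of the proof is then a single estimate. Multiplying the strict inequality $v_{n+1}-1>v_n-1$ (which is just $v_{n+1}>v_n$) by $v_n>0$ gives $(v_{n+1}-1)v_n>(v_n-1)v_n$, whence
\begin{align*}
 (v_{n+1}-1)\,v_n-1+\frac{1}{v_n}
 >(v_n-1)\,v_n-1+\frac{1}{v_n}
 =\frac{v_n^3-v_n^2-v_n+1}{v_n}
 =\frac{(v_n-1)^2(v_n+1)}{v_n}\ge 0 .
\end{align*}
The last inequality holds because $v_n>0$, so the reformulated inequality, and hence \eqref{ieq:Briggs}, follows. The same computation covers both parts: in the log-concave regime $v_n<1$ and in the $2$-log-convex regime $v_n>1$, but in either case the displayed factor $(v_n-1)^2(v_n+1)$ is manifestly non-negative, so that the strict sign is delivered entirely by the strict monotonicity $v_{n+1}>v_n$ of condition $(ii)$. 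Combining this with the index ranges of the previous paragraph yields the Briggs inequality for $n\ge N_0+1$ in the first part and for $n\ge N_0+2$ in the moreover part.

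The individual steps are elementary, so I expect the principal difficulty to be organisational rather than analytic: one must (a) establish the reformulation as an exact equivalence, being careful with the substitution for $a_{n+2}$; (b) recognise the factorisation $x^3-x^2-x+1=(x-1)^2(x+1)$, which is what makes the lower bound non-negative irrespective of whether $v_n<1$ or $v_n>1$; and (c) match the thresholds $N_0+1$ and $N_0+2$ to the depth at which log-concavity, log-convexity and $2$-log-convexity respectively become available on the truncated sequence. Point (c) is the only place where the two hypotheses genuinely diverge, and getting the index accounting right is the main thing to watch.
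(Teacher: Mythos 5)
Your proof is correct, and it takes a genuinely different route from the paper's. The paper proves the two parts by two separate arguments, each using both hypotheses: for the first part it multiplies the inequality $a_n^2/a_{n-1}^2>a_{n+1}^2/a_n^2$ (from strict log-concavity) with $1-v_n>1-v_{n+1}>0$ (from ratio-log-convexity plus log-concavity again, writing $v_n=a_{n-1}a_{n+1}/a_n^2$); for the \emph{moreover} part it rewrites strict $2$-log-convexity as $(v_{n+1}-1)v_n^2>(v_n-1)^2/(v_{n-1}-1)$, using $v_n>1$, and then invokes $v_{n-1}<v_n$. You instead pass to the single equivalent form $(v_{n+1}-1)v_n-1+1/v_n>0$ (the same normalization the paper uses in Section 2 for Theorem \ref{Thm:Briggs-ieq}, but notably \emph{not} in its proof of this theorem) and observe that $v_{n+1}>v_n$ alone gives a lower bound of $(v_n-1)^2(v_n+1)/v_n\ge 0$, with strictness supplied by the strict monotonicity. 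The factorization $v^3-v^2-v+1=(v-1)^2(v+1)$ is exactly what lets one case absorb both regimes $v_n<1$ and $v_n>1$, so your argument shows that condition $(ii)$ by itself implies the Briggs inequality for all $n\ge N_0+1$ — hypotheses $(i)$ and strict $2$-log-convexity are superfluous. This is strictly stronger and more economical than the paper's statement; in particular, in the application to Theorem \ref{Thm:Briggs-ieq-tr} it would make the appeal to the log-concavity result of Jiang and Wang (Theorem \ref{thm:J-W}) unnecessary, leaving Theorem \ref{thm:st-r-lvx} as the only input. One small presentational slip: your explanation of the threshold discrepancy ($N_0+1$ versus $N_0+2$) in terms of how far back $2$-log-convexity reaches is a red herring for your own proof, since you never use $2$-log-convexity — your argument yields $n\ge N_0+1$ in the \emph{moreover} part as well, and the stated range $n\ge N_0+2$ is simply a weaker claim subsumed by it.
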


\begin{proof}
Let $\{a_n\}_{n\ge 0}$ be a sequence with $a_n>0$ for all $n$. Let $N_0>0$ be an integer.
We first prove the first result that conditions $(i)$ and $(ii)$ imply \eqref{ieq:Briggs} for $n\ge N_0+1$.
By condition $(i)$, the sequence $\{a_n\}_{n\ge N_0}$ is strictly log-concave, that is, for $n\ge N_0+1$,
\begin{align}\label{ieq:st-lg-cv}
 a_n^2-a_{n-1}a_{n+1}>0.
\end{align}
Then we have for $n\ge N_0+1$,
\begin{align}\label{ieq:st-lg-cv1}
 \frac{a_n^2}{a_{n-1}^2}>\frac{a_{n+1}^2}{a_n^2}.
\end{align}

On the other hand, by condition (ii), the sequence $\{a_n\}_{n\ge N_0}$ is strictly ratio-log-convex, implying that
\begin{align}\label{ieq:r-lg-cv}
 \frac{a_{n-2}a_{n}}{a_{n-1}^2}
<\frac{a_{n-1}a_{n+1}}{a_n^2},
\quad n\ge N_0+2.
\end{align}
Hence for $n\ge N_0+1$,
\begin{align}\label{ieq:r-lg-cv2}
 \frac{a_{n-1}a_{n+1}}{a_{n}^2}
<\frac{a_{n}a_{n+2}}{a_{n+1}^2}.
\end{align}

It follows from \eqref{ieq:r-lg-cv2} and \eqref{ieq:st-lg-cv} that
\begin{align}\label{ieq:r1}
 1-\frac{a_{n-1}a_{n+1}}{a_{n}^2}
>1-\frac{a_{n}a_{n+2}}{a_{n+1}^2}
>0,
\end{align}
for $n\ge N_0+1$. Combining \eqref{ieq:st-lg-cv1} and \eqref{ieq:r1}, we obtain
\begin{align}\label{ieq:r2}
 \frac{a_n^2}{a_{n-1}^2}\left(1-\frac{a_{n-1}a_{n+1}}{a_{n}^2}\right)
>\frac{a_{n+1}^2}{a_n^2}\left(1-\frac{a_{n}a_{n+2}}{a_{n+1}^2}\right).
\end{align}
Multiply $a_n^2 a_{n-1}^2$ on both sides of \eqref{ieq:r2}. Then we have \eqref{ieq:Briggs} for $n\ge N_0+1$, and hence the first result is proved.

We proceed to show the second statement that the strict $2$-log-convexity of $\{a_n\}_{n\ge N_0}$ together with condition $(ii)$ imply the Briggs inequality \eqref{ieq:Briggs} for $n\ge N_0+2$.
Assume that $\{a_n\}_{n\ge N_0}$ is strictly $2$-log-convex, that is, for $n\ge N_0+2$,
\begin{align}\label{ieq:2-lg-cvx}
\frac{a_n^2-a_{n-1}a_{n+1}}{a_{n-1}^2-a_{n-2}a_{n}}
<\frac{a_{n+1}^2-a_{n}a_{n+2}}{a_n^2-a_{n-1}a_{n+1}}.
\end{align}
To make the proof more concise, let
\begin{align}\label{defi:cn}
c_n=\frac{a_{n-1}a_{n+1}}{a_n^2},\quad n\ge 1.
\end{align}
Thus, \eqref{ieq:2-lg-cvx} can be rewritten as
\begin{align}\label{ieq:2-lg-cv-re}
\frac{1-c_n}{1-c_{n-1}}
<c_n^2\frac{1-c_{n+1}}{1-c_n},
\end{align}
for $n\ge N_0+2$.
Clearly, $\{a_n\}_{n\ge N_0}$ is strictly log-convex. Then we have $c_n>1$ for $n\ge N_0+1$.
So, the inequality \eqref{ieq:2-lg-cv-re} is equivalent to
\begin{align}\label{ieq:2-lg-cv-re2}
(c_{n+1}-1)c_n^2>\frac{(c_n-1)^2}{c_{n-1}-1},\quad n\ge N_0+2.
\end{align}

On the other hand, by condition (ii), we have \eqref{ieq:r-lg-cv}, which can be restated as
\begin{align}\label{ieq:ratio-lg-cv-re}
 c_{n-1}<c_n, \quad n\ge N_0+2.
\end{align}

Combining \eqref{ieq:2-lg-cv-re2} and \eqref{ieq:ratio-lg-cv-re}, one can easily obtain that for $n\ge N_0+2$,
\begin{align}\label{ieq:Briggs-cn>}
(c_{n+1}-1)c_n^2>\frac{(c_n-1)^2}{c_n-1}=c_n-1.
\end{align}
Substituting \eqref{defi:cn} into \eqref{ieq:Briggs-cn>} leads to the desired inequality \eqref{ieq:Briggs} for $n\ge N_0+2$.
\end{proof}

To prove Theorem \ref{Thm:Briggs-ieq-tr}, we also need the following result due to Jiang and Wang \cite{Jiang-Wang}, where the log-concavity of the sequences $\{d_i(m)\}_{m\ge i}$ for $i\ge 1$ was established.

\begin{theorem}$($\cite[Theorem 3.1]{Jiang-Wang}$)$\label{thm:J-W}
For any $i\ge 1$, the sequence $\{d_i(m)\}_{m \ge i}$ is log-concave.
\end{theorem}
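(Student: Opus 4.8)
The plan is to reformulate the assertion as the monotonicity of the ratio sequence and then exploit the three-term recurrence of Theorem~\ref{Thm:rec-dim-im} together with the sharp lower bound of Theorem~\ref{thm:lb-ratdlm1m}. Fix $i\ge 1$ and write $r(m)=d_i(m)/d_i(m-1)$ for $m\ge i+1$; these ratios are positive by \eqref{eq:B-M-seq}, and log-concavity of $\{d_i(m)\}_{m\ge i}$ is exactly the statement that $r(m+1)\le r(m)$ for all $m\ge i+1$. Dividing the recurrence \eqref{eq:rec-dim-m} at index $m+1$ by $d_i(m)$ gives
\begin{align*}
 r(m+1)=\mathfrak{a}(m+1)+\frac{\mathfrak{b}(m+1)}{r(m)},
\end{align*}
with $\mathfrak{a},\mathfrak{b}$ as in \eqref{defi:an&bn}. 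Since $r(m)>0$ and $\mathfrak{b}(m+1)<0$ for $m\ge i+1$, multiplying through by $r(m)$ shows that $r(m+1)\le r(m)$ is equivalent to the quadratic condition
\begin{align*}
 q(r(m)):=r(m)^2-\mathfrak{a}(m+1)\,r(m)-\mathfrak{b}(m+1)\ge 0.
\end{align*}
I would prefer this route over a direct two-sided ratio comparison precisely because it needs only a \emph{lower} bound for the ratio, which is what Theorem~\ref{thm:lb-ratdlm1m} supplies.

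To establish $q(r(m))\ge 0$ I would feed in the bound $r(m)>L(m-1,i)=g(m)$ of Theorem~\ref{thm:lb-ratdlm1m}, valid for $m\ge i+2$, where $g$ is the function in \eqref{eq:g(n)6}. The quadratic $q$ is strictly increasing on $[\mathfrak{a}(m+1)/2,\infty)$, so it suffices to verify the two explicit inequalities
\begin{align*}
 \textrm{(a)}\quad g(m)\ge \frac{\mathfrak{a}(m+1)}{2},\qquad
 \textrm{(b)}\quad g(m)^2-\mathfrak{a}(m+1)\,g(m)-\mathfrak{b}(m+1)\ge 0,
\end{align*}
for $m\ge i+2$: indeed (a) places $g(m)$ at or beyond the vertex of $q$, whence $r(m)>g(m)$ together with the monotonicity of $q$ and (b) yield $q(r(m))>q(g(m))\ge 0$, giving in fact strict log-concavity for $m\ge i+2$. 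Inequality (a) is a close analogue of \eqref{ieq:g(n)>a(n)/2}, which the paper already handles. For (b) I would substitute the closed form \eqref{eq:g(n)6}, isolate the single radical contribution $i\Delta_2/\Delta_1$ (with $\Delta_1,\Delta_2$ as in \eqref{defi:Delta12} evaluated at $n=m$), clear denominators, and square; the condition then reduces to the positivity of an explicit polynomial in $m$ and $i$ on the range $m\ge i+2$, which I expect to prove by expressing it as a manifestly positive combination of terms in the shifted variable $m-i-2$, exactly in the spirit of the $K_1,K_2$ decomposition used for condition (iii) in the proof of Theorem~\ref{thm:st-r-lvx}.

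It remains only to treat the single base case $m=i+1$, for which Theorem~\ref{thm:lb-ratdlm1m} is not available. Here I would instead compute the relevant ratios directly from \eqref{eq:B-M-seq}, using $d_i(i)=2^{-i}\binom{2i}{i}$ together with the analogous closed expressions for $d_i(i+1)$ and $d_i(i+2)$, and then verify $q(r(i+1))\ge 0$ with $r(i+1)=d_i(i+1)/d_i(i)$ made explicit; this amounts to a finite algebraic inequality in $i$.

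The step I expect to be the main obstacle is inequality (b). As $m\to\infty$ with $i$ fixed, one checks from the recurrence that $r(m)$, the bound $g(m)$, and both roots of $q$ all converge to $2$ (the limiting recurrence being $r=4-4/r$, i.e.\ $(r-2)^2=0$), so the margin in (b) is asymptotically thin and the bound $L$ must be used at essentially full strength. Consequently the polynomial emerging after clearing the radical is of high degree, and its positivity, though expected, will require the same careful term-by-term sign bookkeeping that dominates the proof of Theorem~\ref{thm:st-r-lvx}; condition (a) is what guarantees that no spurious sign reversal is introduced by the squaring.
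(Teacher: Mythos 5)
Your proposal addresses a statement that the paper itself never proves: Theorem \ref{thm:J-W} is quoted from Jiang--Wang, and the paper's own shortcut appears immediately afterwards, in the remark following Theorem \ref{thm:lbdlm2} — Zhao's bound \eqref{ineq:lbdlm2} gives $d_i^2(m)>R(i,m)\,d_i(m-1)d_i(m+1)$ with $R(i,m)>1$ for $i\ge1$ and $m\ge i+1$, so strict log-concavity is a one-line consequence. Your route is genuinely different and self-contained: you combine the recurrence \eqref{eq:rec-dim-m} with the lower bound of Theorem \ref{thm:lb-ratdlm1m}, in the style of the Section \ref{Sec:3} verification. The logical skeleton is entirely correct: log-concavity is exactly $r(m+1)\le r(m)$; dividing the recurrence at index $m+1$ by $d_i(m)$ gives $r(m+1)=\mathfrak{a}(m+1)+\mathfrak{b}(m+1)/r(m)$, which for $r(m)>0$ is equivalent to $q(r(m))\ge0$ with $q(x)=x^2-\mathfrak{a}(m+1)x-\mathfrak{b}(m+1)$; the range $m\ge i+2$ for invoking $r(m)>L(m-1,i)=g(m)$ is exactly right; the vertex condition (a) legitimately transfers the bound through the monotonicity of $q$ (and (a) itself follows easily from \eqref{ieq:g(n)>a(n)/2} since $\mathfrak{a}(m+1)-\mathfrak{a}(m)=O(m^{-2})$); and the base case $m=i+1$ is a clean finite check, since $r(i+1)=(2i+3)(2i+1)/(2(i+1))$, which for instance at $i=1$ gives $q(15/4)=53/16>0$.

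The one item you leave open is your inequality (b), and there the situation is actually more favorable than your asymptotic picture suggests. Writing $q(g(m))=\left(g(m)-\mathfrak{a}(m+1)/2\right)^2-\frac{1}{4}\left(\mathfrak{a}(m+1)^2+4\mathfrak{b}(m+1)\right)$, a second-order expansion gives $\mathfrak{a}(m+1)^2+4\mathfrak{b}(m+1)=(13-16i)/m^2+O(m^{-3})$, which is \emph{negative} for every $i\ge1$ once $m$ is large relative to $i$: in that regime $q$ is positive definite and (b) is automatic, with overall margin $q(g(m))\sim(4i-1)/m^2>0$ since $g(m)-\mathfrak{a}(m+1)/2\sim 3/(2m)$. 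Near the boundary the discriminant can be positive (e.g.\ $i=10$, $m=12$ gives discriminant $\approx 10.8$), but there $g(m)$ is of order $i$ and sits far above the larger root ($g(12)\approx 11.97$ versus $x_+\approx 7.10$ in that example), so spot checks and asymptotics both support (b). What still separates your text from a proof is exactly the radical-isolation and polynomial sign bookkeeping in the shifted variable $m-i-2$ that you defer — the analogue of the $K_1,K_2$ decomposition for condition (iii) of Theorem \ref{thm:st-r-lvx}. As it stands, your argument is a well-founded and almost certainly completable plan, but considerably heavier than the derivation from \eqref{ineq:lbdlm2} that the paper records; its compensating advantage is that it uses only the recurrence and the ratio bound already quoted in this paper, rather than importing the log-concavity result wholesale.
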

Notice that by the proof of Theorem \ref{thm:J-W}, it is clear that the sequence $\{d_i(m)\}_{m \ge i}$ is strictly log-concave for any $i\ge 1$.
We would like to point out that Zhao \cite{Zhao2023-2} obtained an inequality in studying the extended reverse ultra log-concavity of the transposed Boros-Moll sequences $\{d_i(m)\}_{m \ge i}$, which implies the strict log-concavity of $\{d_i(m)\}_{m \ge i}$ for $i\ge 1$.
\begin{theorem}$($\cite[Theorem 3.2]{Zhao2023-2}$)$\label{thm:lbdlm2}
For each $i\ge 0$ and $m\ge i+1$, we have
\begin{align}\label{ineq:lbdlm2}
\frac{d_i^2(m)}{d_i(m-1)d_i(m+1)}
> \frac{(m-i+1)m^3}{(m-i)(m+1)(m^2+1)}.
\end{align}
\end{theorem}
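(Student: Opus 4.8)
The plan is to recast \eqref{ineq:lbdlm2} as an upper bound on the ratio $v_i(m)=d_i(m-1)d_i(m+1)/d_i^2(m)$ introduced above and to control $v_i(m)$ through the three-term recursion \eqref{eq:rec-dim-m}. Set $r_i(m)=d_i(m)/d_i(m-1)$, so that $v_i(m)=r_i(m+1)/r_i(m)$. Dividing \eqref{eq:rec-dim-m} at level $m+1$ by $d_i(m)$ gives $r_i(m+1)=\mathfrak{a}(m+1)+\mathfrak{b}(m+1)/r_i(m)$ with $\mathfrak{a},\mathfrak{b}$ as in \eqref{defi:an&bn}, and substituting this back yields the key identity
$$v_i(m)=\phi\big(r_i(m)\big),\qquad \phi(t):=\frac{\mathfrak{a}(m+1)}{t}+\frac{\mathfrak{b}(m+1)}{t^2}.$$
Thus proving \eqref{ineq:lbdlm2} amounts to showing $\phi\big(r_i(m)\big)< \frac{(m-i)(m+1)(m^2+1)}{(m-i+1)m^3}=:U(m,i)$, i.e. bounding $\phi$ evaluated at the unknown ratio $r_i(m)$.

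The idea is that although we do not know $r_i(m)$ exactly, we have a sharp \emph{lower} bound for it, and $\phi$ is decreasing in the relevant range. Indeed, since $\mathfrak{a}(m+1)>0$ and $\mathfrak{b}(m+1)<0$ one computes $\phi'(t)=-\big(\mathfrak{a}(m+1)t+2\mathfrak{b}(m+1)\big)/t^3$, so $\phi$ is strictly decreasing on $(t^\ast,\infty)$ with $t^\ast=-2\mathfrak{b}(m+1)/\mathfrak{a}(m+1)$. By Theorem \ref{thm:lb-ratdlm1m} applied at level $m-1$ we have $r_i(m)>L(m-1,i)=g(m)$, where $g$ is given by \eqref{eq:g(n)6}. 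Provided $g(m)\ge t^\ast$, so that $r_i(m)$ lies in the decreasing range of $\phi$, strict monotonicity gives $v_i(m)=\phi\big(r_i(m)\big)<\phi\big(g(m)\big)$, and the theorem reduces to the explicit inequality $\phi\big(g(m)\big)\le U(m,i)$. The auxiliary comparison $g(m)\ge t^\ast$ is an inequality between an algebraic and a rational quantity; an expansion for large $m$ gives $g(m)-t^\ast\sim \tfrac{3}{2m}>0$ (independently of $i$ to leading order), so I expect it to hold for all $m\ge i+2$ and to be provable by isolating the surd in $g(m)$ and squaring.

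The principal obstacle is the final inequality $\phi\big(g(m)\big)\le U(m,i)$. Since $g(m)$ involves the radicals $\Delta_1,\Delta_2$ of \eqref{defi:Delta12}, the quantities $1/g(m)$ and $1/g^2(m)$ rationalize to expressions in which the single surd $\Delta_1\Delta_2$ occurs linearly; collecting terms reduces $U(m,i)-\phi\big(g(m)\big)\ge 0$ to an inequality of the form $P(m,i)+Q(m,i)\,\Delta_1\Delta_2\ge 0$ with polynomial $P,Q$. I would then argue by sign analysis exactly as in the proof of Theorem \ref{thm:st-r-lvx}: eliminate the radical using the two-sided estimates for $\Delta_1\Delta_2$ furnished by \eqref{ieq:delta12} and its companion upper bound (choosing the estimate according to the sign of $Q$), and verify positivity of the resulting polynomial by rewriting it with manifestly nonnegative coefficients in the shifted variable $m-i-1$, as is done repeatedly in Section \ref{Sec:3}. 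The bookkeeping is heavy but structurally identical to that section; the delicate point is that $U(m,i)$ exceeds the true value of $v_i(m)$ only slightly, so $P+Q\,\Delta_1\Delta_2$ will be tight and the surd must be estimated on the correct side.

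Finally I would treat the cases excluded by the reduction. The step above uses Theorem \ref{thm:lb-ratdlm1m} at level $m-1$, which requires $m\ge i+2$; the base case $m=i+1$ is checked directly from $d_i(i)=2^{-i}\binom{2i}{i}$ together with the values of $d_i(i+1)$ and $d_i(i+2)$ read off from \eqref{eq:B-M-seq}. The case $i=0$, where the radical term in $L$ vanishes and Theorem \ref{thm:lb-ratdlm1m} is not stated, is handled separately: here $g(m)=(4m-1)/(2m)$ and the entire argument becomes rational, so it suffices to establish $r_0(m)>(4m-1)/(2m)$ from a closed form for $d_0(m)$ and then repeat the (now radical-free) monotonicity and comparison steps.
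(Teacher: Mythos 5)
First, a structural point: this paper does not prove Theorem \ref{thm:lbdlm2} at all --- it is imported verbatim from \cite[Theorem 3.2]{Zhao2023-2}, so there is no in-paper proof to compare yours against. Judged on its own terms, your reduction is sound and I could verify its skeleton independently. The identity $v_i(m)=\phi\bigl(r_i(m)\bigr)$ with $\phi(t)=\mathfrak{a}(m+1)/t+\mathfrak{b}(m+1)/t^2$ follows correctly from \eqref{eq:rec-dim-m} applied at level $m+1$ (legitimate for $1\le i\le m$, so for all $m\ge i+1$ when $i\ge 1$); the monotonicity threshold $t^\ast=-2\mathfrak{b}(m+1)/\mathfrak{a}(m+1)$ is right; and your two asymptotic margins check out: expanding $\mathfrak{a}(m+1)=4\bigl(1+\tfrac{i-1}{m}+O(m^{-2})\bigr)$, $\mathfrak{b}(m+1)=-4\bigl(1+\tfrac{2(i-1)}{m}+O(m^{-2})\bigr)$ and $g(m)=2+\tfrac{4i-1}{2m}+O(m^{-2})$ gives $g(m)-t^\ast\sim \tfrac{3}{2m}$ as you claim, and a second-order computation gives $\phi\bigl(g(m)\bigr)=1-\tfrac{4i-1}{4m^2}+O(m^{-3})$ against $U(m,i)=1-\tfrac{i-1}{m^2}+O(m^{-3})$, so $U-\phi(g)\sim \tfrac{3}{4m^2}>0$ uniformly in $i$ at leading order --- the target inequality is true with room, not merely borderline. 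A spot check at $(i,m)=(1,3)$ confirms the whole chain numerically: $v_1(3)=\tfrac{13275}{14792}\approx 0.8974<\phi\bigl(g(3)\bigr)\approx 0.9023<U(3,1)=\tfrac{80}{81}$, with $g(3)=\tfrac{17}{6}>t^\ast\approx 2.007$.

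That said, as a proof this is still a plan at its two decisive junctures. The inequalities $g(m)\ge t^\ast$ and $\phi\bigl(g(m)\bigr)\le U(m,i)$ carry the entire technical content, and you support them only by large-$m$ asymptotics at fixed $i$; those expansions say nothing about the regime near the diagonal ($m=i+1,\,i+2$ with $i$ large), which must be handled by the full rationalize-and-sign-analyze computation you sketch --- writing the result as a polynomial in the shifted variable $m-i-1$, exactly as in Section \ref{Sec:3}. (For what it is worth, at $m=i+2$ one finds $g(i+2)\approx i+2$ while $t^\ast\approx 8$, so the first inequality is comfortable there; but this still needs to be proved, and for the second inequality you will need a companion \emph{upper} bound on $\Delta_1\Delta_2$ alongside \eqref{ieq:delta12}, which the paper never states.) Two smaller corrections: in the $i=0$ case, $r_0(m)=(4m-1)/(2m)$ holds with \emph{equality} (it follows from \eqref{eq:reclm1}, as Section \ref{Sec:4} notes), not as a strict lower bound, so the monotonicity step degenerates; you should simply compute $v_0(m)=\tfrac{(4m+3)m}{(m+1)(4m-1)}$ exactly and check $v_0(m)<\tfrac{m^2+1}{m^2}$, which is immediate. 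And note that Theorem \ref{thm:lb-ratdlm1m} is genuinely tight --- e.g.\ $r_1(3)=\tfrac{43}{15}$ versus $L(2,1)=\tfrac{17}{6}$ --- so the base case $m=i+1$, where that bound is unavailable ($i\le m-2$ fails), really does require the separate closed-form verification you propose; it reduces to a one-variable polynomial inequality in $i$ and checks out numerically (e.g.\ $v_2(3)=\tfrac{3285}{4900}<\tfrac{40}{54}$). In short: a viable and well-conceived route, correctly structured, but incomplete where it matters most; filling in the two polynomial verifications would make it a legitimate independent proof of the cited result.
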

Denote by $R(i,m)$ the right-hand side of \eqref{ineq:lbdlm2}. Clearly, $R(1,m)=m^4/(m^4-1)>1$ for $m\ge 2$, and it is easily checked that $R(i,m)>(m^2+1)/m^2>1$ for $i\ge 2$ and $m\ge i+1$.

We are now in a position to give a proof of Theorem \ref{Thm:Briggs-ieq-tr}.

\begin{proof}[Proof of Theorem \ref{Thm:Briggs-ieq-tr}]
Fix $i\ge 1$. To apply Theorem \ref{thm:2-r-Briggs}, set $a_n=d_i(n)$ and let $N_0=i$. By Theorem \ref{thm:J-W} or Theorem \ref{thm:lbdlm2}, the sequences $\{d_i(n)\}_{n \ge i}$ are strictly log-concave for $i\ge 1$. By Theorem \ref{thm:st-r-lvx}, the transposed Boros-Moll sequences $\{d_i(n)\}_{n\ge i}$ are strictly ratio-log-convex for any $i\ge 1$. Thus, for any given $i\ge 1$, it follows from Theorem \ref{thm:2-r-Briggs} that \eqref{ieq:Briggs} holds for $n\ge i+1$, or equivalently, \eqref{ieq:Briggs-trBM} holds for $m\ge i+1$.

It remains to prove the reverse Briggs inequality for $i=0$. That is, for $m\ge 1$,
\begin{align}\label{ieq:Briggs-trBM-0}
  d_{0}^2(m)(d_{0}^2(m)-d_{0}(m-1)d_{0}(m+1))
<d_{0}^2(m-1)(d_{0}^2(m+1)-d_{0}(m)d_{0}(m+2)).
\end{align}
Observe that \eqref{ieq:Briggs-trBM-0} holds if and only if
\begin{align}\label{ieq:B-trBM-r}
 \delta_0(m):=
 r_{0}^2(m)\left(1-\frac{r_0(m+1)}{r_0(m)}\right)
 -r_0^2(m+1)\left(1-\frac{r_0(m+2)}{r_0(m+1)}\right)
 <0,
\end{align}
for $m\ge 1$, where $r_0(m)=d_0(m)/d_0(m-1)$.
Recall that for $i\ge 0$ and $m\ge i$, Kauers and Paule \cite[Eq. (6)]{Kauers-Paule} showed a recurrence relation as follows.
\begin{align}
  d_i(m+1)
=\frac{m+i}{m+1}d_{i-1}(m)+\frac{4m+2i+3}{2(m+1)}d_i(m).\label{eq:reclm1}
\end{align}
By setting $i=0$ in \eqref{eq:reclm1}, we have
\begin{align*}
r_0(m)=\frac{4m-1}{2m},\quad m\ge 1.
\end{align*}
It is easily checked that
\begin{align*}
 \delta_0(m)=-\frac{8m^2+5m-2}{4m^2(m+1)^2(m+2)}<0,
\end{align*}
for $m\ge 1$, and hence \eqref{ieq:B-trBM-r} is true. This completes the proof.
\end{proof}

\section{Strict log-convexity of $\{\sqrt[n]{d_i(i+n)}\}_{n\ge 1}$}\label{Sec:5}

In this section, we show the strict log-convexity of the sequence $\{\sqrt[n]{d_i(i+n)}\}_{n\ge 1}$ for $i\ge 1$, by applying Theorem \ref{thm:st-r-lvx} and a sufficient condition given by Chen, Guo and Wang \cite{CGW2014}. Note that Zhao \cite{Zhao2023-2} proved an asymptotic result of this property for $i\ge 136$.
\begin{theorem}$($\cite[Theorems 7.1 \& 7.2]{Zhao2023-2}$)$\label{thm:nsqr-dlm-Lv}
The sequence $\{\sqrt[n]{d_i(i+n)}\}_{n\ge 1}$ is strictly log-concave for $i=0$, and is strictly log-convex for each $1\le i \le 135$.
For $i\ge 136$, the sequences $\{\sqrt[n]{d_i(i+n)}\}_{n\ge i^2}$ are strictly log-convex.
\end{theorem}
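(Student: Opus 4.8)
The plan is to deduce the strict log-convexity of $\{\sqrt[n]{d_i(i+n)}\}_{n\ge 1}$ for each fixed $i\ge 1$ directly from the strict ratio-log-convexity already established in Theorem \ref{thm:st-r-lvx}, via the sufficient condition of Chen, Guo and Wang \cite{CGW2014}. First I would set $a_n=d_i(i+n)$ for $n\ge 0$, so that the transposed sequence $\{d_i(m)\}_{m\ge i}$ becomes $\{a_n\}_{n\ge 0}$ and Theorem \ref{thm:st-r-lvx} says exactly that $\{a_n\}_{n\ge 0}$ is strictly ratio-log-convex. Writing $u_n=\ln a_n$ and $v_n=u_n-u_{n-1}$, the desired log-convexity of $\{\sqrt[n]{a_n}\}_{n\ge 1}$ is the statement that $f(n)=u_n/n$ is strictly convex, i.e. that the quantity
\begin{align*}
 D_n:=2u_n+n(n-1)v_{n+1}-n(n+1)v_n
\end{align*}
is positive for all $n\ge 2$, since for $n\ge 2$ one has $f(n+1)-2f(n)+f(n-1)=D_n/\big(n(n-1)(n+1)\big)$.

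The key observation, which is the mechanism behind the Chen--Guo--Wang criterion, is that consecutive values of $D_n$ differ by a single convexity expression in the $v$'s. A direct telescoping computation gives
\begin{align*}
 D_{n+1}-D_n=n(n+1)\big(v_{n+2}+v_n-2v_{n+1}\big),
\end{align*}
and the bracket is exactly the logarithm of the ratio-log-convexity inequality $(a_{n+1}/a_n)^2<(a_n/a_{n-1})(a_{n+2}/a_{n+1})$. Thus Theorem \ref{thm:st-r-lvx} guarantees $v_{n+2}+v_n-2v_{n+1}>0$ for every $n\ge 1$, so $\{D_n\}_{n\ge 1}$ is strictly increasing. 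Consequently the entire problem collapses to a single base case: it suffices to show $D_1\ge 0$, since then $D_n>D_1\ge 0$ for all $n\ge 2$.

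Finally I would evaluate the base case explicitly. Since $v_1=u_1-u_0$, one finds $D_1=2u_0=2\ln a_0=2\ln d_i(i)$, so the initial condition is simply $d_i(i)\ge 1$. From \eqref{eq:B-M-seq} the single surviving term at $m=i$ gives the closed form $d_i(i)=2^{-i}\binom{2i}{i}$, and $\binom{2i}{i}\ge 2^i$ for all $i\ge 1$ (with equality only at $i=1$). Hence $D_1\ge 0$, and the strict inequality $D_n>0$ for $n\ge 2$ follows, yielding the strict log-convexity of $\{\sqrt[n]{d_i(i+n)}\}_{n\ge 1}$ for every $i\ge 1$; this in particular strengthens the log-convex cases of Theorem \ref{thm:nsqr-dlm-Lv} so as to hold from $n=1$ and for all $i$.

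The substantive difficulty has already been absorbed into Theorem \ref{thm:st-r-lvx}: once exact strict ratio-log-convexity is available for all $m\ge i+2$, the present argument is the \emph{easy direction} and needs no further sharp estimates. The only points demanding care are the bookkeeping of indices --- checking that the convexity inequality supplied by Theorem \ref{thm:st-r-lvx} covers every increment $D_{n+1}-D_n$ down to $n=1$, so that the induction actually reaches the base case --- and the verification of the closed form $d_i(i)=2^{-i}\binom{2i}{i}$ together with $d_i(i)\ge 1$ uniformly in $i$. Neither presents a genuine obstacle.
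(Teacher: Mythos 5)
Your telescoping argument is correct, and it proves the strict log-convexity half of the statement in a genuinely different way from the paper. I checked the computations: with $u_n=\ln d_i(i+n)$ and $v_n=u_n-u_{n-1}$ one indeed gets $f(n+1)-2f(n)+f(n-1)=D_n/\big(n(n-1)(n+1)\big)$ and $D_{n+1}-D_n=n(n+1)(v_{n+2}-2v_{n+1}+v_n)$, and the bracket is strictly positive for every $n\ge 1$ by Theorem \ref{thm:st-r-lvx} (the first increment $D_2-D_1$ uses exactly the case $m=i+2$, which the theorem covers); the degenerate base case $D_1=2\ln d_i(i)$ and the closed form $d_i(i)=2^{-i}\binom{2i}{i}\ge 1$ from \eqref{eq:B-M-seq} are also right. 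This yields the paper's stronger Theorem \ref{thm:nsqr-dim-Lv} (all $i\ge 1$, all $n\ge 1$), which subsumes the log-convex cases of the quoted statement. The paper proceeds differently: it invokes the Chen--Guo--Wang criterion (Theorem \ref{thm:cgw3.6}) with $N=1$ as a black box, whose initial condition \eqref{eq:cgw3.6} is the three-term inequality $\mathfrak{r}(i)=d_i(i+2)/\big(d_i(i+1)\sqrt[3]{d_i(i+3)}\big)<1$; verifying this costs a monotonicity lemma from \cite{Zhao2023-2} (positivity of a degree-18 polynomial identity) plus a numerical evaluation of $\mathfrak{r}(1)$. You instead re-derive the mechanism behind that criterion and anchor the induction one step earlier, at $n=1$, where the coefficient $n(n-1)$ annihilates $v_{n+1}$ and the initial condition collapses to the trivial inequality $d_i(i)\ge 1$ (equality only at $i=1$, where strictness is restored by $D_2>D_1$). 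In effect you have proved a sharper version of the criterion --- a positive, strictly ratio-log-convex sequence $\{S_n\}_{n\ge 0}$ with $S_0\ge 1$ has $\{\sqrt[n]{S_n}\}_{n\ge 1}$ strictly log-convex --- which eliminates the paper's heaviest computation in this section.

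One caveat: the quoted theorem also asserts strict log-concavity of $\{\sqrt[n]{d_0(n)}\}_{n\ge 1}$ for $i=0$, which your proposal does not address. This is not fatal to your method: the identical telescoping with all inequalities reversed works, since by \eqref{eq:reclm1} with $i=0$ one has $d_0(m)/d_0(m-1)=(4m-1)/(2m)=2-\tfrac{1}{2m}$, whose logarithm is strictly concave in $m$, so $\{d_0(m)\}_{m\ge 0}$ is strictly ratio-log-concave, and $d_0(0)=1$ gives $D_1=0$, whence $D_n<0$ for $n\ge 2$. You should either add this half-page or state explicitly that you are only proving (and strengthening) the log-convexity assertions.
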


The main result of this section is as follows.
\begin{theorem}\label{thm:nsqr-dim-Lv}
The sequence $\{\sqrt[n]{d_i(i+n)}\}_{n\ge 1}$ is strictly log-convex for each $i\ge 1$. That is, for any $i\ge 1$ and $n\ge 1$, we have
\begin{align}
\frac{\sqrt[n+1]{d_i(i+n+1)}}{\sqrt[n]{d_i(i+n)}}
<\frac{\sqrt[n+2]{d_i(i+n+2)}}{\sqrt[n+1]{d_i(i+n+1)}}.
\end{align}
\end{theorem}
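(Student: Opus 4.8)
The plan is to deduce the conclusion from the strict ratio-log-convexity already established in Theorem~\ref{thm:st-r-lvx}, via the criterion of Chen, Guo and Wang~\cite{CGW2014} that converts (strict) ratio-log-convexity of a positive sequence into (strict) log-convexity of its sequence of $n$th roots, provided a single initial inequality holds. Concretely, I would fix $i\ge 1$, set $a_n=d_i(i+n)$ for $n\ge 0$, and note that the asserted inequality says exactly that $\{\sqrt[n]{a_n}\}_{n\ge 1}$ is strictly log-convex. Re-indexing \eqref{ieq:r-lg-cvx-BM} through $m=i+n$ shows that $\{a_n\}_{n\ge 0}$ is strictly ratio-log-convex, that is, $(a_n/a_{n-1})^2<(a_{n-1}/a_{n-2})(a_{n+1}/a_n)$ for all $n\ge 2$; this supplies the hypothesis of the criterion.

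To see why an initial condition is unavoidable and to pin it down, I would write $w_1=\log a_1$ and $w_k=\log(a_k/a_{k-1})$ for $k\ge 2$, so that $\tfrac1n\log a_n=\tfrac1n\sum_{k=1}^n w_k$ is the arithmetic mean of $w_1,\dots,w_n$. Since the arithmetic means of a convex sequence again form a convex sequence, strict log-convexity of $\{\sqrt[n]{a_n}\}$ follows once $\{w_k\}_{k\ge 1}$ is convex. Strict ratio-log-convexity delivers precisely the convexity of the triples $(w_{k-1},w_k,w_{k+1})$ for $k\ge 3$; the only triple it does not cover is $k=2$ (where $w_1=\log a_1$ rather than $\log(a_1/a_0)$), and its convexity $w_1-2w_2+w_3>0$ is equivalent to the single initial inequality
\begin{align*}
 d_i^3(i+1)\,d_i(i+3)>d_i^3(i+2),
\end{align*}
equivalently $d_i(i+1)\cdot\frac{d_i(i+3)}{d_i(i+2)}>\bigl(d_i(i+2)/d_i(i+1)\bigr)^2$.

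It then remains only to verify this one inequality for every $i\ge 1$. I would compute the three initial values in closed form starting from $d_i(i)=2^{-i}\binom{2i}{i}$ and iterating the recurrences \eqref{eq:reclm1} and \eqref{eq:rec-dim-m} (for instance $d_i(i+1)=2^{-i-1}\binom{2i}{i}(2i+1)(2i+3)/(i+1)$), turning the inequality into a rational, and after clearing denominators polynomial, inequality in $i$. Alternatively, since the factor $d_i(i+1)$ grows like $2^{i}$ while the quantity $\bigl(d_i(i+2)/d_i(i+1)\bigr)^2/\bigl(d_i(i+3)/d_i(i+2)\bigr)$ stays $O(1)$ (the consecutive ratios being controlled from below by \eqref{ineq:lb-dlm1m} and from above by an easy complementary bound), the inequality holds with an increasing margin, so one may settle finitely many small $i$ by direct evaluation and all large $i$ by these bounds.

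The main obstacle is this uniform verification of the initial condition rather than any conceptual difficulty: the dominant term $d_i(i+1)$ comfortably outweighs the bounded ratio-of-ratios, so the inequality is true with room to spare, but one must either push the closed forms through a somewhat bulky polynomial positivity check valid for all $i\ge 1$, or assemble the ratio bounds cleanly enough to cover the small and large regimes together. Once the initial condition is in hand, the criterion of~\cite{CGW2014} (with strictness coming from the strict ratio-log-convexity of Theorem~\ref{thm:st-r-lvx}) yields the strict log-convexity of $\{\sqrt[n]{d_i(i+n)}\}_{n\ge 1}$ for each $i\ge 1$.
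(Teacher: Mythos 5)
Your proposal is correct and takes essentially the same route as the paper: it likewise applies the Chen--Guo--Wang criterion (Theorem \ref{thm:cgw3.6}) with $S_n=d_i(i+n)$ and $N=1$, feeds in the strict ratio-log-convexity of Theorem \ref{thm:st-r-lvx}, and reduces everything to exactly your initial inequality $d_i^3(i+1)\,d_i(i+3)>d_i^3(i+2)$, phrased there as $\mathfrak{r}(i)=d_i(i+2)/\bigl(d_i(i+1)\sqrt[3]{d_i(i+3)}\bigr)<1$. The only difference lies in how that single inequality is settled: the paper invokes Zhao's lemma that $\mathfrak{r}(i)$ is strictly decreasing in $i$ (itself a closed-form polynomial-positivity check of the kind you describe) together with the numerical evaluation $\mathfrak{r}(1)<1$, whereas you propose a direct computation or an exponential-versus-polynomial estimate --- which works, though your parenthetical claim that $\bigl(d_i(i+2)/d_i(i+1)\bigr)^2/\bigl(d_i(i+3)/d_i(i+2)\bigr)$ stays $O(1)$ is inaccurate (it grows roughly linearly in $i$), harmlessly so since $d_i(i+1)$ grows like $2^i$ up to polynomial factors.
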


Chen, Guo and Wang \cite{CGW2014} established the following criterion which indicates that ratio log-convexity of a sequence $\{S_n\}$ together with certain initial condition imply log-convexity of the sequence $\{\sqrt[n]{S_n}\}$.

\begin{theorem}$($\cite[Theorem 3.6]{CGW2014}$)$\label{thm:cgw3.6}
Let $\{S_n\}_{n\ge 0}$ be a positive sequence. If the sequence $\{S_n\}_{n\ge N}$ is ratio log-convex and
\begin{align}\label{eq:cgw3.6}
  \frac{\sqrt[N+1]{S_{N+1}}}{\sqrt[N]{S_{N}}}
 <\frac{\sqrt[N+2]{S_{N+2}}}{\sqrt[N+1]{S_{N+1}}}
\end{align}
for some positive integer $N$, then the sequence $\{\sqrt[n]{S_n}\}_{n\ge N}$ is strictly log-convex.
\end{theorem}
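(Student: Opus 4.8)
The plan is to pass to logarithms and reduce the assertion to the convexity of the auxiliary sequence $f(n):=\tfrac{1}{n}\ln S_n$, since $\{\sqrt[n]{S_n}\}_{n\ge N}$ is strictly log-convex if and only if $f(n-1)-2f(n)+f(n+1)>0$ for every $n\ge N+1$. Writing $u_n=\ln S_n$ and $v_n=u_n-u_{n-1}=\ln(S_n/S_{n-1})$, I would first record two translations of the hypotheses into this language: ratio-log-convexity of $\{S_n\}_{n\ge N}$ is exactly the convexity of the increment sequence, namely
\begin{align*}
v_{n+2}-2v_{n+1}+v_n\ge 0,\qquad n\ge N+1,
\end{align*}
while the initial condition \eqref{eq:cgw3.6} is precisely the single strict inequality $f(N)-2f(N+1)+f(N+2)>0$.

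The core of the argument is to monitor the scaled second difference
\begin{align*}
E_n:=n(n-1)u_{n+1}-2(n^2-1)u_n+n(n+1)u_{n-1},
\end{align*}
so that $f(n-1)-2f(n)+f(n+1)=E_n/\bigl(n(n^2-1)\bigr)$, with $n(n^2-1)>0$ for $n\ge N+1\ge 2$. Hence it suffices to prove $E_n>0$ for all $n\ge N+1$. Rewriting $E_n$ in terms of the increments gives the form $E_n=2u_n-n(n+1)v_n+n(n-1)v_{n+1}$, which still carries the absolute term $2u_n$; this is exactly why the increments alone cannot decide the sign and why the initial condition must enter.

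The key step, and the heart of the proof, is the telescoping identity
\begin{align*}
E_{n+1}-E_n=n(n+1)\bigl(v_{n+2}-2v_{n+1}+v_n\bigr),
\end{align*}
in which the absolute terms cancel and the right-hand side is simply $n(n+1)$ times the ratio-log-convexity expression, hence nonnegative for $n\ge N+1$. Thus $\{E_n\}_{n\ge N+1}$ is non-decreasing. Since \eqref{eq:cgw3.6} supplies the strict base case $E_{N+1}>0$, a one-line induction gives $E_n\ge E_{N+1}>0$ for every $n\ge N+1$, which is the sought strict convexity of $f$ and therefore the strict log-convexity of $\{\sqrt[n]{S_n}\}_{n\ge N}$.

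I expect the main obstacle to be conceptual rather than computational: one must decide to track $E_n$ (the signed second difference of $f$) and then notice that, although $E_n$ genuinely depends on the absolute value $u_n$ rather than merely on the increments, its own first difference $E_{n+1}-E_n$ collapses cleanly onto the convexity of $\{v_n\}$. Verifying the telescoping identity is short but slightly delicate: in $E_n$ the coefficient of $u_n$ reduces to the constant $2$ because $n(n-1)-2(n^2-1)+n(n+1)=2$, so $E_n=2u_n+\cdots$; when forming $E_{n+1}-E_n$ the term $2(u_{n+1}-u_n)=2v_{n+1}$ merges with the remaining $v$-terms and cancels exactly, leaving only $n(n+1)(v_{n+2}-2v_{n+1}+v_n)$. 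Once this identity is in place the induction, anchored by \eqref{eq:cgw3.6}, completes the argument.
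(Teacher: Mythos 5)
Your proof is correct: the translation of the hypotheses is accurate (ratio log-convexity of $\{S_n\}_{n\ge N}$ gives $v_{n+2}-2v_{n+1}+v_n\ge 0$ precisely for $n\ge N+1$, and \eqref{eq:cgw3.6} is exactly $E_{N+1}>0$), the telescoping identity checks out since the coefficient of $v_{n+1}$ in $E_{n+1}-E_n$ is $2-(n+1)(n+2)-n(n-1)=-2n(n+1)$, and the induction $E_n\ge E_{N+1}>0$ for $n\ge N+1$, combined with $n(n^2-1)>0$, yields the strict log-convexity of $\{\sqrt[n]{S_n}\}_{n\ge N}$. The paper itself gives no proof of this statement (it is quoted from Chen, Guo and Wang \cite{CGW2014}), and their argument is the multiplicative counterpart of yours --- an induction on the inequality $S_{n-1}^{\,n(n+1)}S_{n+1}^{\,n(n-1)}>S_n^{\,2(n^2-1)}$ propagated by ratio log-convexity from the initial case \eqref{eq:cgw3.6} --- so your log-domain telescoping is essentially the same proof in additive form.
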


We are now ready to present a proof of Theorem \ref{thm:nsqr-dim-Lv}.
\begin{proof}[Proof of Theorem \ref{thm:nsqr-dim-Lv}]

For $i\ge 1$, setting $S_n=d_i(i+n)$ and $N=1$ in Theorem \ref{thm:cgw3.6}. By Theorem \ref{thm:st-r-lvx}, the sequence $\{d_i(i+n)\}_{n\ge 0}$ is strictly ratio log-convex for each $i \ge 1$. It suffices to verify the condition \eqref{eq:cgw3.6}, or equivalently,
\begin{align}\label{defi:mf(r(i))}
 \frac{d_i(i+2)}{d_i(i+1)\cdot \sqrt[3]{d_i(i+3)}}<1,\quad i\ge 1.
\end{align}
For convenience, denote by $\mathfrak{r}(i)$ the left-hand side of \eqref{defi:mf(r(i))}.

Notice that it was proved in \cite[Lemma 7.5]{Zhao2023-2} that the sequence $\{\mathfrak{r}(i)\}_{i\ge 0}$ is strictly decreasing. That is, for $i\ge 0$,
\begin{align}\label{ieq:dlmsqr>}
\frac{d_i(i+2)}{d_i(i+1)\sqrt[3]{d_i(i+3)}}
>\frac{d_{i+1}(i+3)}{d_{i+1}(i+2)\sqrt[3]{d_{i+1}(i+4)}}.
\end{align}
To be self-contained, we make a brief overview of the proof for \eqref{ieq:dlmsqr>}.
Since $d_i(m)>0$ for $m\ge i\ge 0$, the inequality \eqref{ieq:dlmsqr>} holds if and only if
\begin{align}\label{ieq:dlmsqr>re}
\frac{d_i(i+2)d_{i+1}(i+2)}{d_i(i+1)d_{i+1}(i+3)}
>\frac{\sqrt[3]{d_i(i+3)}}{\sqrt[3]{d_{i+1}(i+4)}}.
\end{align}
Applying \eqref{eq:B-M-seq}, it is easy to obtain the expressions of the terms in \eqref{ieq:dlmsqr>re}. For example,
\begin{align*}
 d_i(i+1)
=\frac{(2i+3)(2i+1)(2i)!}{2^{i+1}(i+1)(i !)^2}.
\end{align*}
Denote by $V_1$ and $V_2$, respectively, the LHS and RHS of \eqref{ieq:dlmsqr>re}. We have checked that
\begin{align*}
 V_1^3-V_2^3
=\frac{(i+1)\sum_{k=0}^{18}s_k i^k}{(i+2)^6 (2i+3)^3 (4i^2+26i+43)^3 (4i^2+30i+59)(2i+9)(2i+5)},
\end{align*}
where $s_k>0$ are integers for $k=0,1,\ldots,18$.
Clearly, $V_1^3-V_2^3>0$ for $i\ge 0$, which leads to \eqref{ieq:dlmsqr>re}, as well as \eqref{ieq:dlmsqr>}.

Using \eqref{eq:B-M-seq} again, it is easily verified that
\begin{align*}
 \mathfrak{r}(1)
=\frac{d_1(3)}{d_1(2)\cdot \sqrt[3]{d_1(4)}}
=\frac{43\cdot 885^\frac{2}{3}\cdot 32^\frac{1}{3}}{13275}
=0.9+\epsilon_1,\quad 0<\epsilon_1<10^{-1}.
\end{align*}
Thus, $\mathfrak{r}(1)<1$. By \eqref{ieq:dlmsqr>}, we see that $\mathfrak{r}(i)<1$ for all $i\ge 1$, and hence \eqref{defi:mf(r(i))} is proved. That is, for any $i\ge 1$, the condition in \eqref{eq:cgw3.6} is satisfied for $N=1$. It follows from Theorem \ref{thm:cgw3.6} that the sequence  $\{\sqrt[n]{d_i(i+n)}\}_{n\ge 1}$ is strictly log-convex for each $i \ge 1$. The proof is complete.
\end{proof}

\section*{Acknowledgments}
This work was partially supported by the Fundamental Research Funds for the Central Universities.

\end{document}